\setlist[itemize]{leftmargin=*}
\setlist[enumerate]{leftmargin=*}
\newtheorem{conjecture}[theorem]{Conjecture}
\newtheorem{remark}{Remark}[section]
\newtheorem{example}{Example}[section]
\crefname{hypothesis}{Hypothesis}{Hypotheses}
\title{Robust Blockwise Random Pivoting: \\ 
Fast and Accurate Adaptive Interpolative Decomposition
\thanks{
    Submitted to the editors on \today.
    \funding{
        YD was supported by NYU Courant Instructorship. CC was supported in part by startup funds of North Carolina State University. KP was supported in part by the Peter O'Donnell Jr. Postdoctoral Fellowship at the Oden Institute.
        Overall, this research was supported by the Office of Naval Research (N00014-18-1-2354), the National Science Foundation (DMS-2313434 and DMS-1952735), and the Department of Energy ASCR (DE-SC0022251).
        }
    }
}
\author{
    Yijun Dong\thanks{
        Courant Institute, New York University (\texttt{yd1319@nyu.edu}).
    }
    \and
    Chao Chen\thanks{
        Department of Mathematics, North Carolina State University (\texttt{cchen49@ncsu.edu}).
    }
    \and
    Per-Gunnar Martinsson\thanks{
        Department of Mathematics \& Oden Institute, University of Texas at Austin (\texttt{pgm@oden.utexas.edu}).
    }
    \and
    Katherine Pearce\thanks{
        Oden Institute, University of Texas at Austin (\texttt{katherine.pearce@austin.utexas.edu}).
    }
}
\newcommand{\E}{\mathbb{E}}
\newcommand{\N}{\mathbb{N}} 
\newcommand{\R}{\mathbb{R}} 
\newcommand{\SSS}{\mathbb{S}}
\newcommand{\ab}{\mathbf{a}}
\newcommand{\db}{\mathbf{d}}
\newcommand{\eb}{\mathbf{e}}
\newcommand{\pb}{\mathbf{p}}
\newcommand{\ub}{\mathbf{u}}
\newcommand{\vb}{\mathbf{v}}
\newcommand{\xb}{\mathbf{x}}
\newcommand{\yb}{\mathbf{y}}
\newcommand{\Ib}{\mathbf{I}}
\newcommand{\Lb}{\mathbf{L}}
\newcommand{\Qb}{\mathbf{Q}}
\newcommand{\Rb}{\mathbf{R}}
\newcommand{\Ub}{\mathbf{U}}
\newcommand{\Vb}{\mathbf{V}}
\newcommand{\Wb}{\mathbf{W}}
\newcommand{\Xb}{\mathbf{X}}
\newcommand{\Yb}{\mathbf{Y}}
\newcommand{\Ccal}{\mathcal{C}}
\newcommand{\Ecal}{\mathcal{E}}
\newcommand{\Ncal}{\mathcal{N}}
\newcommand{\Scal}{{\mathcal{S}}}
\newcommand{\Tcal}{{\mathcal{T}}}
\newcommand{\Xcal}{\mathcal{X}}
\newcommand{\Sfrak}{\mathfrak{S}}
\newcommand{\eps}{\epsilon}
\newcommand{\thetab}{\boldsymbol{\theta}}
\newcommand{\mub}{\boldsymbol{\mu}}
\newcommand{\xib}{\boldsymbol{\xi}}
\newcommand{\pib}{\boldsymbol{\pi}}
\newcommand{\Sigmab}{\boldsymbol{\Sigma}}
\newcommand{\Omegab}{\boldsymbol{\Omega}}
\renewcommand{\le}{\leqslant}
\renewcommand{\ge}{\geqslant}
\renewcommand{\leq}{\leqslant}
\newcommand{\dfeq}{\triangleq}
\newcommand{\pinv}{\dagger}
\newcommand*{\argmin}{\mathop{\mathrm{argmin}}}
\newcommand*{\argmax}{\mathop{\mathrm{argmax}}}
\newcommand{\diag}{\mathop{\mathrm{diag}}}
\newcommand{\rank}{\mathop{\mathrm{rank}}}
\newcommand{\nucnorm}[1]{{\left\vert\kern-0.25ex\left\vert\kern-0.25ex\left\vert #1 
    \right\vert\kern-0.25ex\right\vert\kern-0.25ex\right\vert}}
\newcommand{\wh}[1]{\widehat{#1}}
\newcommand{\eg}{\emph{e.g.}\xspace}
\newcommand{\ie}{\emph{i.e.}\xspace}
\newcommand{\iid}{\emph{i.i.d.}\xspace}
\newcommand{\cf}{\emph{cf.}\xspace}
\newcommand{\rbr}[1]{\left(#1\right)}
\newcommand{\sbr}[1]{\left[#1\right]}
\newcommand{\cbr}[1]{\left\{#1\right\}}
\newcommand{\nbr}[1]{\left\|#1\right\|}
\newcommand{\abbr}[1]{\left\vert#1\right\vert}
\newcommand{\abr}[1]{\langle#1\rangle}
\newcommand{\nnbr}[1]{{\left\vert\kern-0.25ex\left\vert\kern-0.25ex\left\vert #1 \right\vert\kern-0.25ex\right\vert\kern-0.25ex\right\vert}}
\newcommand{\csep}[2]{\left\{#1 \middle\vert #2 \right\}}
\newcommand{\csepp}[2]{\left\{#1 ~\middle\vert~ #2 \right\}}
\newcommand{\bmat}[1]{\begin{bmatrix} #1 \end{bmatrix}}
\definecolor{commentcolor}{RGB}{110,154,155}   
\renewcommand{\b}{\textbf}
\renewcommand{\t}[1]{\text{#1}}
\newcommand{\tsvd}[2]{{\abr{#1}}_{#2}}
\newcommand{\errskel}[2]{\Ecal_{#1}\rbr{#2}}
\newcommand{\errid}[3]{\Ecal_{#1}\rbr{#3 \mid #2}}
\newcolumntype{H}{>{\setbox0=\hbox\bgroup}c<{\egroup}@{}}
\begin{document}

\maketitle

\begin{abstract}
    The interpolative decomposition (ID) aims to construct a low-rank approximation formed by a basis consisting of row/column skeletons in the original matrix and a corresponding interpolation matrix. 
    This work explores fast and accurate ID algorithms from comprehensive perspectives for empirical performance,
    {including accuracy in both skeleton selection and interpolation matrix construction, efficiency in terms of asymptotic complexity and hardware efficiency, as well as rank adaptiveness.}
    While many algorithms have been developed to optimize some of these aspects, practical ID algorithms proficient in all aspects remain absent. 
    To fill in the gap, we introduce \emph{robust blockwise random pivoting (RBRP)} that is 
    {asymptotically fast, hardware-efficient, and rank-adaptive, providing accurate skeletons and interpolation matrices comparable to the best existing ID algorithms in practice.}
    Through extensive numerical experiments on various synthetic and natural datasets, we demonstrate the appealing empirical performance of RBRP from the aforementioned perspectives, as well as the robustness of RBRP to adversarial inputs.
\end{abstract}

\begin{keywords}
Randomized Numerical Linear Algebra, Interpolative Decomposition, Column Subset Selection, Adaptive Sampling
\end{keywords}

\begin{MSCcodes}
65F55, 65Y05, 68T05
\end{MSCcodes}

\section{Introduction}

The interpolative decomposition (ID) is a special type of low-rank approximation where a basis for the row/column space of a given target matrix is formed by subselecting rows/columns. 
This allows an ID approximation to identify structure and essential information within data. 
{The ID has found a wide range of applications in numerical analysis, data compression, and machine learning. 
For example, in fast direct solvers for linear elliptic partial differential equations (PDEs), the ID is a widely used method for compressing certain off-diagonal blocks of rank-structured matrices~\citep{martinsson2005fast,gillman2012direct,ho2012fast,ho2013hierarchical,corona2015n,minden2017recursive}.
Another emerging application of the ID lies in the low-rank adaptation (LoRA)~\citep{hu2021lora} for parameter-efficient fine-tuning of large language models~\citep{houlsby2019parameter,lester2021power} where ID of weight matrices can be leveraged as good initialization for the low-rank adapter matrices~\citep{wang2024pmss}.
More broadly, the idea of ID naturally connects to various related problems like CUR decomposition, kernel quadrature, optimal experimental design, data selection, active learning, and model pruning~\citep{mahoney2009cur,epperly2023kernel,pukelsheim2006optimal,avron2013faster,dasgupta2008hierarchical,dong2024randomly,axiotis2024data,li2024greedy}.}

Given $\Xb = \sbr{\xb_1, \cdots, \xb_n}^T \in \R^{n \times d}$ consisting of $n$ data points $\cbr{\xb_i \in \R^d}_{i \in [n]}$, along with a constant $\eps > 0$ and a target rank $r \in \N$, we aim to construct {a $(r,\eps)$-ID of $\Xb$}:
\begin{align}\label{eq:id}
    \underset{n \times d}{\Xb} \approx \underset{n \times k}{\Wb}\ \underset{k \times d}{\Xb_S}
    \quad \emph{s.t.} \quad
    \nbr{\Xb - \Wb\Xb_S}_F^2 \le (1+\eps) \nbr{\Xb - \tsvd{\Xb}{r}}_F^2
\end{align}
where {$k$ is the (unknown) target rank;
$\tsvd{\Xb}{r}$ is the optimal rank-$r$ approximation of $\Xb$;
$S = \cbr{s_1,\cdots,s_k} \subset [n]$ are the skeleton indices corresponding to the \emph{row skeleton} submatrix $\Xb_S = [\xb_{s_1},\cdots,\xb_{s_k} ]^\top \in \R^{k \times d}$; and}
$\Wb \in \R^{n \times k}$ is 
{an interpolation matrix that (nearly) minimizes the error} $\nbr{\Xb - \Wb\Xb_S}_F^2$ for the given $\Xb_S$.
{While we focus on row-wise ID throughout this work, the column-wise ID can be defined and conducted analogously by considering the transpose of $\Xb$.}

The construction of an ID can be divided into two stages: 
\begin{enumerate}[label=(\roman*)]
    \item Select a skeleton subset $S \subset [n]$ that achieves a small \emph{skeletonization error}
    \begin{align}\label{eq:skeletonization_error}
        \errskel{\Xb}{S} \dfeq \nbr{\Xb  - \Xb \Xb_S^\dagger \Xb_S}_F^2 = \min_{\Wb \in \R^{n \times \abbr{S}}} \nbr{\Xb  - \Wb \Xb_S}_F^2.
    \end{align}
    \item Given $S$, compute an interpolation matrix $\Wb \in \R^{n \times \abbr{S}}$ efficiently\footnote{
        Although \eqref{eq:skeletonization_error} directly provides $\Wb = \Xb\Xb_S^\dagger$ as the minimizer of $\min_{\Wb}\errid{\Xb}{S}{\Wb} = \errskel{\Xb}{S}$, the explicit computation of $\Xb\Xb_S^\dagger$ takes $O(ndk)$ time (\eg~via \eqref{eq:stable_id}), which is undesired. Therefore, whenever available, information from the skeleton selection process is usually leveraged to evaluate/approximate $\Wb$ efficiently in $o(ndk)$ time (as elaborated in \Cref{sec:id_construction}), which may lead to the gap $\errid{\Xb}{S}{\Wb} > \errskel{\Xb}{S}$.
    } that (approximately) minimizes the \emph{interpolation error}: $\errid{\Xb}{S}{\Wb} \dfeq \nbr{\Xb  - \Wb \Xb_S}_F^2$. 
\end{enumerate}
In this work, we explore fast and accurate ID algorithms from both the skeleton subset selection and interpolation matrix construction perspectives, starting with the question:
\begin{center}
    \emph{How do we construct a $(r,\eps)$-ID efficiently with as small a skeleton $S$ as possible, \\
    without prior knowledge of the target rank $k$?}
\end{center}

\subsection{What are Fast and Accurate ID Algorithms?}
To formalize the concept of ``fast and accurate'' ID algorithms, we dissect the main question above into the following properties, together providing systematic performance measurements for ID algorithms from both the efficiency and accuracy perspectives.
\begin{enumerate}[label=(\alph*)]
    \item {\b{Skeleton complexity} measures the utility of skeletons selected by an ID algorithm in terms of the minimum possible rank of a $(r,\eps)$-ID for any given $\Xb$. Precisely, the skeleton complexity is a property of an ID algorithm defined as the minimum rank $k$ such that for any $\Xb$, $\abbr{S}=k$ is sufficient to guarantee either a $(r,\eps)$-ID for a deterministic algorithm or $\E[\errskel{\Xb}{S}] \le (1+\eps) \nbr{\Xb - \tsvd{\Xb}{r}}_F^2$ for a randomized algorithm.}
    
    \item \b{Asymptotic complexity} measures the number of floating point operations (FLOPs) in the skeleton selection process.
    
    \item {\b{Hardware efficiency}} refers to the implementation advantage that dominant cost of skeleton selection in an ID algorithm appears as matrix-matrix, instead of matrix-vector, multiplications. 
    On modern processors, Level-3 BLAS, or matrix-matrix, operations can leverage memory cashing much more efficiently than Level-2 or Level-1 BLAS operations~\citep{blackford2002updated,goto2008high}. As a result, a sequence of matrix operations implemented using Level-3 BLAS can attain significantly higher performance than an optimal implementation using Level-2 or Level-1 BLAS, especially on parallel processors.
    
    \item {\b{Rank-adaptiveness}} refers to the ability to evaluate the skeletonization error efficiently on the fly {without requiring prior knowledge of the target rank $k$.
    \begin{remark}\label{def:error_revealing}
        An ID algorithm is rank-adaptive if after selecting a skeleton subset $S$, it can evaluate $\errskel{\Xb}{S}$ efficiently with at most $O\rbr{n}$ operations.
        That is, given any relative error tolerance $\tau > 0$, {a rank-adaptive} ID algorithm can determine an appropriate rank $k$ with negligible (\ie no more than $O(nk)$) additional cost, while selecting a skeleton subset $S$ of size $k$ such that $\errskel{\Xb}{S} \le \tau \nbr{\Xb}_F^2$. 
    \end{remark}}
    For the sake of analysis, we define a useful constant associated with $\Xb$:
    \begin{align}\label{eq:relative_tail}
        \eta_r \dfeq {\nbr{\Xb - \tsvd{\Xb}{r}}_F^2}/{\nbr{\Xb}_F^2},
    \end{align}
    which quantifies the relative tail weight of $\Xb$ with respect to the rank $r$ and represents the relative optimal rank-$r$ approximation error of $\Xb$.
    Notice that when $\tau < (1+\eps)\eta_r$, the algorithm outputs a $(r,\eps)$-ID.
    
    \item \b{ID-revealing property} characterizes if the skeleton selection stage of an ID algorithm contains sufficient information for efficient construction of the interpolation matrix.
    {\begin{definition}[ID-revealing properties]\label{def:id_revealing}
        We say a skeleton selection algorithm is
        \begin{itemize}
            \item \b{ID-revealing} if it contains sufficient information in addition to $S$ such that the optimal interpolation matrix $\Wb = \Xb\Xb_S^\dagger$ can be evaluated efficiently in $O(nk^2)$ time;
            \item \b{$\gamma$-ID-revealing} for some $\gamma > 1$ if it can construct a suboptimal interpolation matrix $\Wb \approx \Xb\Xb_S^\dagger$ with $\errid{\Xb}{S}{\Wb} \le \gamma \errskel{\Xb}{S}$ in $O(nk^2)$ time; and
            \item \b{non-ID-revealing} if neither the optimal interpolation matrix nor its approximations can be constructed in $O(nk^2)$ time, \ie, the interpolation matrix can only be constructed by solving the least-squares problem \eqref{eq:skeletonization_error} in $O(ndk)$ time.
        \end{itemize}
    \end{definition}}
\end{enumerate}

\subsection{How to Combine Adaptiveness and Randomness for ID?}
Adaptiveness and randomness are two key algorithmic properties {for skeleton selection}. 
On the one hand, with adaptiveness, skeleton selection in each step is aware of {the previous selections} so that redundant skeleton selection can be better avoided. 
On the other hand, randomness {balances the exploitation with exploration}, thereby improving the robustness to adversarial inputs. 

To ground the notions of adaptiveness and randomness, we synopsize some representative existing skeleton selection algorithms below, as well as in \Cref{tab:adaptiveness_randomness_summary}, with a focus on the aforementioned properties for performance measurement.
\begin{enumerate}[label=(\alph*)]
    \item \b{Greedy pivoting} is a classical strategy in numerical linear algebra that involves \emph{only adaptiveness}. As an example of greedy pivoting, column-pivoted QR (CPQR)~\citep[Section 5.4.2]{golub2013matrix} {starts with the input matrix $\Xb$ as the active submatrix,} picks the row with the maximum norm in each step, and adaptively updates the {active submatrix} via projection onto the orthogonal complement of the skeletons. 
    {CPQR is rank-adaptive and ID-revealing, and usually provides low skeleton complexities in practice.}
    However, deterministic greedy pivoting algorithms like CPQR are inherently sequential and vulnerable to adversarial inputs where the skeleton complexity can approach the problem size $n$~\cite{kahan1966numerical,chen2022randomly}.
    
    \item \b{Sampling} comprises the well-researched body of skeleton selection methods that involve \emph{only randomness}. Some common examples related to ID include squared-norm sampling~\citep{frieze2004fast}, leverage score sampling~\citep{mahoney2009cur}, and DPP sampling~\citep{belabbas2009spectral,kulesza2011k,derezinski2021determinantal}. As a trade-off between the skeleton complexity and efficiency, the fast ($O(nd)$-time) sampling methods like squared-norm sampling tend to suffer from high skeleton complexities that depend heavily on the matrix~\citep{deshpande2006matrix,chen2022randomly}; whereas constructions of the more sophisticated distributions like leverage score and DPP are usually expensive~\citep{drineas2012fast,hough2006determinantal,derezinski2019fast}. Moreover, for ID, sampling methods generally fail to be {rank-adaptive} or ID-revealing.
    
    \item \b{Random pivoting} combines adaptiveness and randomness by replacing the greedy selection of maximum norm in CPQR with random sampling according to the squared-norm distribution associated with the residual~\citep{deshpande2006matrix,deshpande2006adaptive,chen2022randomly}. Closely related to the ID problem considered in this work, the idea of random pivoting has been revitalized by the inspiring recent work \cite{chen2022randomly} in the context of column Nystr\"om approximation with a nearly optimal skeleton complexity guarantee~{\citep[Theorem 5.1]{chen2022randomly}}. However, analogous to CPQR, although random pivoting enjoys the desired {rank-adaptiveness} and ID-revealing properties, the sequential nature of random pivoting compromises its efficiency in practice.
    
    \item \b{Sketchy pivoting} {combines adaptiveness and randomness alternatively by first sketching $\Xb$ from right and then selecting skeletons through greedy pivoting on the sketched matrix~\citep{voronin2017efficient,dong2021simpler}.}
    In contrast to random pivoting, the cost of sketchy pivoting is dominated by the embarrassingly parallelizable sketching process. As a trade-off for superior empirical efficiency, sketchy pivoting sacrifices the {rank-adaptiveness} since sketching requires prior knowledge of the target $k$ or its overestimation.
    Furthermore, sketchy pivoting is not ID-revealing due to the loss of information during sketching. 
    As a simple but effective remedy for the loss of accuracy, we observe that multiplicative oversampling can remarkably improve the gap between the interpolation and skeletonization error $\errid{\Xb}{S}{\Wb} - \errskel{\Xb}{S}$, which we refer to as \emph{oversampled sketchy ID (OSID)}. 
    In \Cref{subsec:inexact_id_revealing_algo}, we show that with sufficiently large oversampling, OSID is $\gamma$-ID-revealing for some small constant $\gamma > 1$.
\end{enumerate}

\begin{table}[!h]
    \centering
    \vspace{-1em}
    \caption{
        Summary of performance for skeleton selection methods that leverage adaptiveness and/or randomness.
        Recall from \eqref{eq:relative_tail} that $\eta_r \dfeq {\nbr{\Xb - \tsvd{\Xb}{r}}_F^2}/{\nbr{\Xb}_F^2}$.
        For randomized methods, ($\E$) denotes that $\E[\errskel{\Xb}{S}] \le (1+\eps) \nbr{\Xb - \tsvd{\Xb}{r}}_F^2$ holds in expectation.
        The skeleton complexities in $(\cdot)^*$ are conjectured based on extensive empirical evidence without formal proofs.
        In the ``Dominant Cost'' column (showing both the asymptotic complexity and {hardware efficiency}), ``m-v'' stands for matrix-vector multiplications, which are significantly less efficient than ``m-m''---matrix-matrix multiplications~\cite{goto2008high}.
    }\label{tab:adaptiveness_randomness_summary}
    \begin{adjustbox}{width=1\textwidth}
    \begin{tabular}{c|cccc}
    \toprule
    Skeleton Selection & Skeleton Complexity & Dominant Cost & {Rank-adaptive} & ID-revealing \\
    \midrule
    \shortstack{Greedy pivoting/CPQR \\ (\citep[Section 5.4.2]{golub2013matrix})} & $k \ge \rbr{1-\rbr{1+\eps}\eta_r}n$ & $O(ndk)$ m-v & \b{Yes} & \b{Yes} \\
    \midrule
    \shortstack{Squared-norm sampling \\ (\eqref{eq:squared_norm_distribution}~\citep{frieze2004fast})} & ($\E$) $k \ge \frac{r-1}{\eps \eta_r} + \frac{1}{\eps}$ & $O(nd)$ \b{m-m} & No & No \\
    \midrule
    \shortstack{Random pivoting \\ (\Cref{algo:pqr_id}~\citep{chen2022randomly,deshpande2006matrix,deshpande2006adaptive})} & \shortstack[c]{($\E$) $k \ge k_\t{RP} = \frac{r}{\eps} + $ \\ $r \min\cbr{\log\rbr{\frac{1}{\eps \eta_r}}, 1 + \log\rbr{\frac{2^r}{\eps}}}$} & $O(ndk)$ m-v & \b{Yes} & \b{Yes} \\
    \midrule
    \shortstack{Sketchy pivoting \\ (\cite[Algorithm 1]{dong2021simpler}\citep{voronin2017efficient})} & ($\E$) $\rbr{k \gtrsim k_\t{RP}}^*$ & $O(ndk)$ \b{m-m} & No & No \\
    \midrule
    \shortstack{\b{RBRP} (this paper) \\ (\Cref{algo:bg_pqr_id})} & ($\E$) $\rbr{k \gtrsim k_\t{RP}}^*$ & $O(ndk)$ \b{m-m} & \b{Yes} & \b{Yes} \\
    \bottomrule
    \end{tabular}
    \end{adjustbox}
\end{table}

To fill in a critical missing piece in the family of existing methods in \Cref{tab:adaptiveness_randomness_summary} that leverage adaptiveness and randomness, in this work, we introduce \b{robust blockwise random pivoting (RBRP, \Cref{algo:bg_pqr_id})}---an ID algorithm that is \emph{$O(ndk)$-time, hardware-efficient, {rank-adaptive}, and ID-revealing}, with comparable skeleton complexities to the nearly optimal random pivoting~\citep{chen2022randomly} in practice.

In particular, the plain blockwise random pivoting (\eg an extension of the blocked RPCholesky algorithm introduced in {\cite[Algorithm 3]{chen2022randomly}}) tends to suffer from unnecessarily large skeleton complexity (\cf \Cref{fig:id_gmm_rpgp} (left)) under adversarial inputs (\eg~\Cref{ex:gmm_pitfall_plain_bas}) due to the lack of local adaptiveness within each block. 
As a remedy, RBRP leverages \emph{robust blockwise filtering} (\Cref{rmk:robust_blockwise_filtering})---applying CPQR to every small sampled block locally and discarding the potentially redundant points through a truncation on the relative residual of the CPQR. 
By choosing a reasonable block size, such robust blockwise filtering effectively resolves the inefficiency in skeleton complexity encountered by the plain blockwise random pivoting (\cf \Cref{fig:id_gmm_rpgp} (right)), with negligible additional cost.

\subsection{Notations and Roadmap}
For $\Xb \in \R^{n \times d}$, let $\Xb = \Ub \Sigmab \Vb^\top = \sum_{i=1}^{\min(n,d)} \sigma_i \ub_i \vb_i^\top$ be the singular value decomposition of $\Xb$ with $\sigma_1 \ge \cdots \ge \sigma_{\min(n,d)} \ge 0$. Given any $r \in [\min(n,d)]$, we denote $\Ub_r = \sbr{\ub_1,\cdots,\ub_r}$, $\Sigmab_r = \diag\rbr{\sigma_1,\cdots,\sigma_r}$, $\Vb_r = \sbr{\vb_1,\cdots,\vb_r}$ such that $\tsvd{\Xb}{r} = \Ub_r \Sigmab_r \Vb_r^\top$.
We follow the MATLAB notation for vector and matrix indexing throughout this work. 
For any $n \in \N$, let $[n] = \cbr{1,\cdots,n}$ and $\Delta_n \in [0,1]^n$ be the probability simplex of dimension $n$.
Also, let $\Sfrak_n$ be the set of all permutations of $[n]$.
For any distribution $\pb \in \Delta_n$ and $k \in \N$, $\pb^k$ represents the joint distribution over $\Delta_n^k = \Delta_n \times \cdots \times \Delta_n$ such that $S = \cbr{s_j \sim \pb}_{j \in [k]} \sim \pb^k$ is a set of $k$ $\iid$ samples.
For any $n \in \N$, let $\eb_i$ ($i \in [n]$) be the $i$-th canonical base of $\R^n$.
For any $m \in \N$, let $\b{1}_{m} \in \R^m$ and $\b{0}_{m} \in \R^m$ be the vector with all entries equal to one and zero, respectively.
We adapt the standard asymptotic notations: for any functions $f,g: \R_+ \to \R_+$, we write $f = O\rbr{g}$ or $f \lesssim g$ if there exists some constant $C>0$ such that $f(x) \leq C g(x)$ for all $x \in \R_+$; $f = \Omega\rbr{g}$ or $f \gtrsim g$ if $g = O\rbr{f}$; $f = \Theta(g)$ or $f \asymp g$ if $f = O\rbr{g}$ and $f = \Omega\rbr{g}$.

As a roadmap, we review the adaptiveness-only and randomness-only skeleton selection algorithms in \Cref{sec:adaptive_or_random}, along with the two different combinations of adaptiveness and randomness in \Cref{sec:adaptive_and_random}. Then, we introduce the RBRP algorithm formally in \Cref{sec:rbas}. In \Cref{sec:id_construction}, we discuss the efficient construction of interpolation matrices after skeleton selection. In \Cref{sec:experiments}, we demonstrate the appealing empirical accuracy and efficiency of RBRP via extensive numerical experiments. The code, as well as a high-performance implementation of RBRP, is available at \href{https://github.com/dyjdongyijun/Robust_Blockwise_Random_Pivoting}{https://github.com/dyjdongyijun/Robust\_Blockwise\_Random\_Pivoting}.

\section{Adaptiveness vs. Randomness}\label{sec:adaptive_or_random}

\subsection{Adaptiveness via Greedy Squared-norm Pivoting}
Greedy pivoting is a classical way of incorporating adaptiveness in skeleton selection~\citep{gu1996efficient,sorensen2016deim,voronin2017efficient}.
Column pivoted QR (CPQR)~\citep[Section 5.4.2]{golub2013matrix} is one of the most commonly used greedy pivoting methods. 
The pivoting strategy and adaptive update are two key components that characterize a greedy pivoting method. In particular, given an active submatrix $\Xb^{(t)} \in \R^{n \times d}$ with at most $n-t$ nonzero rows, CPQR involves greedy squared-norm pivoting and adaptive QR updates\footnote{
    {Notice that \eqref{eq:qr_update} can be viewed as the $(t+1)$th-step of either CPQR on $\Xb^\top$ or row pivoted LQ (RPLQ) on $\Xb$. Throughout this work, we refer to column pivoting on $\Xb^\top$ as CPQR without ambiguity.}
    In \eqref{eq:qr_update}, we consider the exact arithmetic and express the QR update as a Gram-Schmidt process for illustration purposes. In practice, QR updates are usually conducted via Householder reflection~\citep[Section 5.1.2]{golub2013matrix} for numerical stability.
}:
\begin{align}\label{eq:qr_update}
    s_{t+1} \gets \argmax_{i \in [n]} \nbr{\Xb^{(t)}\rbr{i,:}}_2^2, \ 
    \Xb^{(t+1)} \gets \Xb^{(t)} - \Xb^{(t)} \frac{\Xb^{(t)}(s_{t+1},:)^\top \Xb^{(t)}(s_{t+1},:)}{\nbr{\Xb^{(t)}(s_{t+1},:)}_2^2}.
\end{align}
The rank-revealing property of CPQR~\citep[Theorem 7.2]{gu1996efficient} implies that the first $k$ pivots $S=\cbr{s_1,\cdots,s_k}$ form a reasonable skeleton subset with $\errskel{\Xb}{S} \le 4^{k} \rbr{n-k} \nbr{\Xb - \tsvd{\Xb}{k}}_F^2$, where the exponential dependence on $k$ is tight under adversarial inputs (\eg Kahan matrices~\citep{kahan1966numerical}\citep[Example 1]{gu1996efficient}). 
In the worst case, {\citep[Theorem C.2]{chen2022randomly}} shows that CPQR requires almost all the $n$ points ($k \ge \rbr{1 - (1+\eps)\eta_r}n$) before finding a $(r,\eps)$-ID.

\begin{remark}[Greedy squared-norm pivoting is vulnerable but empirically successful]\label{rmk:greedy_squared_norm_pivot}
    The vulnerability to adversaries is a common and critical drawback of squared-norm pivoting in CPQR (and greedy pivoting methods in general).
    Nevertheless, it has been widely observed and conjectured that such adversarial inputs are extremely scarce in practice~\citep{trefethen1990average}. Therefore, CPQR serves as a good skeleton selection method for most data matrices empirically~\citep{voronin2017efficient,dong2021simpler}, as long as orthonormality is maintained to high precision \cite{2005_martinsson_skel}.
\end{remark}

\begin{remark}[CPQR is inherently sequential but {rank-adaptive}]\label{rmk:cpqr_update}
    From the efficiency perspective, a major drawback of CPQR is that the adaptive updates are inherently sequential ({only half of the operations can be cast into high-performance matrix-matrix multiplication in the best-known algorithm that underlies LAPACK's routine geqp3~\cite{quintana1998blas}}). As a result, CPQR tends to be observably slower than simple alternatives with the same asymptotic complexity like Gaussian elimination with partial pivoting~\citep[Section 3.4]{golub2013matrix}---another common greedy pivoting method that retains parallelizability at a cost of the rank-revealing guarantee.

    Nevertheless, the adaptive nature grants CPQR an appealing bonus---the {rank-adaptiveness}. 
    {It is known that the skeletonization error of CPQR, \ie $\errskel{\Xb}{S} = \nbr{\Xb^{\rbr{\abbr{S}}}}_F^2$, can be downdated efficiently at the end of each update.}
\end{remark}

\subsection{Randomness via Squared-norm Sampling}
Sampling is a widely used approach for random skeleton selection (also referred to as column subset selection, \eg in \cite{derezinski2020improved,cortinovis2020low}), where the intuitive goal is to construct a distribution $\pb \in \Delta_n$ over the $n$ data points $\cbr{\xb_i}_{i \in [n]}$ according to their relative ``importance.''
Analogous to the squared-norm pivoting in CPQR (\eqref{eq:qr_update}), sampling according to the squared-norm distribution:
\begin{align}\label{eq:squared_norm_distribution}
    p_i = {\nbr{\xb_i}_2^2}/{\nbr{\Xb}_F^2} \quad \forall~ i \in [n]
\end{align}
is a natural choice for such ``importance'' sampling.
In particular, \cite{deshpande2006matrix} and {\cite[Theorem C.3]{chen2022randomly}} shows that squared-norm sampling requires $k \ge \frac{r-1}{\eps \eta_r} + \frac{1}{\eps}$ skeleton points to form a $(r,\eps)$-ID in expectation.
Compared to the skeleton complexity of pivoting (CPQR), the squared-norm sampling provides 
{a lower skeleton complexity in expectation that scales proportionally to $r$ instead of $n$.}

Despite the improved expected skeleton complexity compared to CPQR, like CPQR, squared-norm sampling can also be vulnerable to adversarial inputs by selecting redundant skeletons with higher probabilities. Such potential inefficiency can be reflected by the linear dependence of the skeleton complexity $k \ge \frac{r-1}{\eps \eta_r} + \frac{1}{\eps}$ on $\frac{r}{\eta_r}$, as instantiated in \Cref{ex:adversary_squared_norm_sampling}.
\begin{example}[Adversarial input for squared-norm sampling]\label{ex:adversary_squared_norm_sampling}
    For $n \in \N$ divisible by $k \in \N$, $r \in \N$ such that $k = (1+\beta) r < n$ for some $\beta \in \N$, letting $\alpha_i = \sqrt{\alpha} \gg \beta \ge 1$ for all $1 \le i \le r$, and $\alpha_i = 1$ for all $r+1 \le i \le k$, we consider the following data matrix 
    \begin{align*}
        \Xb^\top = \sbr{\alpha_1 \eb_1 \b{1}_{n/k}^\top, \cdots, \alpha_r \eb_r \b{1}_{n/k}^\top, \alpha_{r+1} \eb_{r+1} \b{1}_{n/k}^\top, \cdots, \alpha_k \eb_k \b{1}_{n/k}^\top} \in \R^{d \times n}
    \end{align*}
    with SVD 
    \begin{align*}
        \Xb = \underbrace{\bmat{\sqrt{\frac{k}{n}} \b{1}_{n/k} & \cdots & \b{0}_{n/k} \\ \vdots & \ddots & \vdots \\ \b{0}_{n/k} & \cdots & \sqrt{\frac{k}{n}} \b{1}_{n/k}}}_{\Ub \in \R^{n \times k}}
        \underbrace{\bmat{\sqrt{\frac{n}{k}} \alpha_1 && \\ & \ddots & \\ && \sqrt{\frac{n}{k}} \alpha_k}}_{\Sigmab \in \R^{k \times k}} 
        \underbrace{\bmat{\eb_1^\top \\ \vdots \\ \eb_k^\top}}_{\Vb^\top \in \R^{k \times d}}.
    \end{align*}

    Observe that a skeleton subset of size $k$ with the $k$ scaled canonical bases $\csep{\alpha_i \eb_i}{i \in [k]}$ is sufficient to form an ID that exactly recovers $\Xb$. That is, the uniform distribution over the $k$ scaled canonical bases is preferred.

    However, with $\alpha \gg 1$, the squared-norm distribution is heavily skewed to those rows in $\Xb$ pointing toward $\csepp{\eb_i}{1 \le i \le r}$, which leads to redundant samples along those directions:
    \begin{align*}
        p_i = 
        \begin{cases}
            \frac{\alpha}{\alpha r + k -r} = \frac{\alpha}{\rbr{\alpha + \beta} r} \quad &\forall~ 1 \le i \le r \\
            \frac{1}{\alpha r + k -r} = \frac{1}{\rbr{\alpha + \beta} r} \quad &\forall~ r+1 \le i \le k
        \end{cases}.
    \end{align*}  

    Such inefficiency is mirrored in the skeleton complexity $k \ge \frac{r-1}{\eps \eta_r} + \frac{1}{\eps}$ with a small $\eta_r$:
    \begin{align*}
        \eta_r = \frac{k-r}{\alpha r + k - r} = \frac{\beta}{\alpha+\beta} \ll 1.
    \end{align*}
    For example, with $\beta = 1$ and $\alpha \gg k$, the squared-norm sampling suffers from a skeleton complexity $\frac{k}{\eps} + \frac{\alpha (r-1) - r}{\eps} \gg k$ far greater than necessary.

    It is worth highlighting that adaptiveness effectively remedies this inefficiency of squared-norm sampling. For instance, after picking $\alpha_i \eb_i$ as the first skeleton, the adaptive update (\eg \eqref{eq:qr_update}) eliminates the remaining $\frac{n}{k}-1$ rows of $\Xb$ in the $\eb_i$ direction and therefore helps exclude the possible redundant samples. 
\end{example}

Determinantal point process (DPP)~\citep{belabbas2009spectral,derezinski2021determinantal} and leverage score sampling~\citep{mahoney2009cur} are well-studied alternatives to squared-norm sampling that provide better skeleton complexities but with much higher cost of constructing the corresponding distributions as a trade-off. 
For example, the k-DPP sampling~\citep{kulesza2011k} is known for its nearly optimal skeleton complexity: as few as $k \ge \frac{r}{\eps} + r - 1$ skeletons are sufficient to provide a $(r,\eps)$-ID in expectation~\citep{belabbas2009spectral,guruswami2012optimal,chen2022randomly}.
As a trade-off, the classical SVD-based algorithm for k-DPP$(\Xb)$~\citep{hough2006determinantal} takes $O\rbr{nd^2}$ to construct the distribution and $O\rbr{nk^2}$ to draw $k$ samples.

In contrast to sampling from sophisticated distributions, uniform sampling works well for incoherent matrices (whose singular vectors distribute evenly along all canonical bases) but easily fails on coherent ones~\citep{cohen2015uniform}.
Although an in-depth comparison of different sampling methods is beyond the scope of this work, we refer the interested reader to the following enlightening references: \cite{deshpande2006matrix,cohen2015uniform,derezinski2021determinantal,chen2022randomly}.

\section{Combining Adaptiveness and Randomness}\label{sec:adaptive_and_random}
Recall that the vulnerability of greedy pivoting (\Cref{rmk:greedy_squared_norm_pivot}) comes from the scarce adversarial inputs. {Under randomization, the probability that greedy pivoting keeps making adversarial skeleton selections becomes negligible.} Meanwhile, the vulnerability of squared-norm sampling (\Cref{ex:adversary_squared_norm_sampling}) can be effectively circumvented by incorporating adaptive updates.  
Therefore, a natural question is \emph{how to combine adaptiveness and randomness effectively for better skeleton selection}.
In this section, we review two existing ideas that involve both adaptiveness and randomness in different ways, while comparing their respective advantages and drawbacks.

\subsection{Random Pivoting}
Random pivoting~\citep{deshpande2006matrix,deshpande2006adaptive,arthur2007k,chen2022randomly} is arguably the most intuitive skeleton selection method that combines adaptiveness and randomness. It can be simply described as ``replacing the greedy squared-norm pivoting in CPQR with squared-norm sampling'' as formalized in \Cref{algo:pqr_id}, with an asymptotic complexity of {$O(ndk)$}, consisting of $k$ inherently sequential passes through $\Xb$ (or its submatrices). The storage of $\Lb^{(t)}$ and $\Qb^{(t)}$ requires $O(nk)$ and $O(dk)$ memory. Analogous to greedy pivoting (\Cref{rmk:cpqr_update}), \Cref{algo:pqr_id} is {rank-adaptive} thanks to its adaptive nature.

\begin{algorithm}[!t]
    \caption{Sequential random pivoting (SRP)~{\citep[Algorithm 7]{chen2022randomly}}}\label{algo:pqr_id}
    \begin{algorithmic}[1]
        \Require (a) Data matrix $\Xb = \sbr{\xb_1,\cdots,\xb_n}^\top \in \R^{n \times d}$, 
        (b) Relative error tolerance $\tau = \rbr{1+\eps}\eta_r \in (0,1)$ (or target rank $k \in \N$), 

        \Ensure (a) Skeleton indices $S = \cbr{s_1,\cdots,s_k} \subset [n]$, 
        (b) $\Lb \in \R^{n \times k}$,
        (c) $\pib \in \Sfrak_n$
    
        \State $\Lb^{(0)} \gets \b{0}_{n \times 0}$, $\Qb^{(0)} \gets \b{0}_{d \times 0}$, $\pib \gets [1,\cdots,n]$, $S \gets \emptyset$, $t = 0$
        \State $\db^{(0)} \in \R_{\ge 0}^n$ with $\db^{(0)}(i) = \nbr{\xb_i}_2^2 ~\forall~ i \in [n]$ \Comment{{$O(nd)$}}
        \While{$\nbr{\db^{(t)}}_1 < \tau \nbr{\db^{(0)}}_1$ (or $\abbr{S} < k$)}
            \State $t \gets t + 1$
            \State $s_t \sim \db^{(t-1)} / \nbr{\db^{(t-1)}}_1$ 
    
            \State $S \gets S \cup \cbr{s_t}$, $\pib \gets \mathtt{swap}\rbr{\pib, t, s_t}$ \Comment{$\pib(1:t)$ consists of $S$}
            \State $\ab^{(t)} \gets \b{0}_{n}$, $\pib_a \gets \pib(t:n)$ \Comment{``a'' for active}
    
            \State $\vb \gets \xb_{s_t} - \Qb^{(t-1)} \rbr{\rbr{\Qb^{(t-1)}}^\top \xb_{s_t}} \in \R^d$ \Comment{{$O(dt)$}}
            \State $\Qb^{(t)} \gets \sbr{\Qb^{(t-1)}, \vb / \nbr{\vb}_2} \in \R^{d \times t}$ \Comment{{$O(d)$}}
            \State $\ab^{(t)}\rbr{\pib_a} \gets \Xb\rbr{\pib_a,:} \vb$ \Comment{{$O\rbr{nd}$}}
            
            \State $\Lb^{(t)} \gets \sbr{\Lb^{(t-1)}, \ab^{(t)} / \sqrt{\ab^{(t)}\rbr{s_t}}}$ \Comment{$\ab^{(t)}\rbr{s_t} = \db^{(t-1)}(s_t)$}
            \State $\db^{(t)}(i) \gets 0 ~\forall~ i \in S$, $\db^{(t)}(i) \gets \db^{(t-1)}(i) - \frac{\rbr{\ab^{(t)}(i)}^2}{\ab^{(t)}\rbr{s_t}} ~\forall~ i \notin S$ \Comment{{$O(n)$}}
        \EndWhile
        \State $k \gets \abbr{S}$, $\Lb \gets \Lb^{(t)}$
    \end{algorithmic} 
\end{algorithm}

The idea of random pivoting is ubiquitous in various related problems, \eg the combination of volume sampling and adaptive sampling for low-rank approximation~\citep{deshpande2006adaptive}, the Randomly Pivoted Cholesky (RPCholesky)~\citep{chen2022randomly} for kernel column Nystr\"om approximation~\citep[19.2]{martinsson2020randomized}, and the $D^2$-sampling for \texttt{k-means++} clustering~\citep{arthur2007k}. 
Specifically, {in light of the close connection between partially pivoted QR and pivoted Cholesky factorization (see \eg \citep[\S 5.2]{higham1990analysis} and \cite{musco2017sublinear,chen2022randomly})}, applying RPCholesky to $\Xb \Xb^\top \in \R^{n \times n}$ is equivalent to \Cref{algo:pqr_id} in exact arithmetic.
In light of such equivalence, {\cite[Theorem 5.1]{chen2022randomly}} provides a nearly optimal skeleton complexity guarantee for sequential random pivoting:
\begin{proposition}[Sequential random pivoting~\citep{chen2022randomly}]\label{prop:sas_skeleton_complexity}
    For any $\eps > 0$ and $r \in [n]$, the skeleton subset $S$ selected by \Cref{algo:pqr_id} provides $\E\sbr{\errskel{\Xb}{S}} \le \rbr{1 + \eps} \nbr{\Xb - \tsvd{\Xb}{r}}_F^2$ when\footnote{
        Without strictly specifying, we generally assume that $\eps > 0$ is small enough such that the $(r,\eps)$-ID is a non-trivial approximation, \ie $\eps < \min\cbr{1/\eta_r, 2^r}$ such that both logarithmic terms are positive. Otherwise, the skeleton complexity guarantee still holds by replacing any negative logarithmic terms with zeros.
    }
    \begin{align*}
        k = \abbr{S} \ge \frac{r}{\eps} + r \cdot \min\cbr{\log\rbr{\frac{1}{\eps \eta_r}}, 1 + \log\rbr{\frac{2^r}{\eps}}}.
    \end{align*}
\end{proposition}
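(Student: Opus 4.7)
The plan is to reduce the ID skeletonization error bound to the trace-error bound for the Randomly Pivoted Cholesky (RPCholesky) algorithm applied to the PSD matrix $\Ab \dfeq \Xb\Xb^\top \in \R^{n \times n}$, and then invoke the skeleton complexity analysis of \cite{chen2022randomly} verbatim. The first step is to verify the equivalence between Algorithm~\ref{algo:pqr_id} and RPCholesky on $\Ab$ in exact arithmetic. Observe that the squared-norm sampling probability in \Cref{algo:pqr_id} satisfies $p_i = \nbr{\xb_i}_2^2 / \nbr{\Xb}_F^2 = \Ab(i,i)/\tr(\Ab)$, which is precisely the diagonal sampling distribution used by RPCholesky, and the adaptive update in line 11-12 corresponds to a rank-one downdate of the diagonal of the residual $\Ab - \Ab\Ab_S^\pinv \Ab_S$ in exactly the same way that pivoted Cholesky factors $\Ab$.

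Next I would translate the skeletonization error into a trace-error on $\Ab$. Writing $P_S \dfeq \Xb_S^\top(\Xb_S \Xb_S^\top)^\pinv \Xb_S$ for the orthogonal projector onto $\row(\Xb_S)$, we have
\begin{align*}
    \errskel{\Xb}{S} \;=\; \nbr{\Xb - \Xb P_S}_F^2 \;=\; \tr(\Xb\Xb^\top) - \tr(\Xb P_S \Xb^\top) \;=\; \tr\bigl(\Ab - \Ab(:,S)\Ab(S,S)^\pinv \Ab(S,:)\bigr),
\end{align*}
which is precisely the Nystr\"om trace error that RPCholesky controls. Moreover, since the eigenvalues of $\Ab$ are the squared singular values of $\Xb$, the optimal rank-$r$ tail satisfies $\sum_{i>r} \lambda_i(\Ab) = \nbr{\Xb - \tsvd{\Xb}{r}}_F^2$, and the relative tail parameter $\eta_r$ for $\Xb$ matches the relative spectral tail for $\Ab$.

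With the reduction in hand, I would invoke the skeleton complexity bound of \cite[Theorem~5.1]{chen2022randomly} for RPCholesky applied to $\Ab$, which states that after $k$ iterations,
\begin{align*}
    \E\bigl[\tr(\Ab - \Ab\langle S\rangle)\bigr] \;\le\; (1+\eps)\sum_{i>r} \lambda_i(\Ab)
\end{align*}
whenever $k$ exceeds either $\frac{r}{\eps} + r\log\bigl(\frac{1}{\eps\eta_r}\bigr)$ or $\frac{r}{\eps} + r\bigl(1 + \log(2^r/\eps)\bigr)$. Taking the smaller of the two gives exactly the claim. Substituting the identifications above yields the proposition.

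The main technical obstacle does not lie in this paper but in the underlying \cite{chen2022randomly} analysis, which is invoked as a black box. The delicate step there is the per-iteration expected contraction lemma showing that one RPCholesky step reduces a certain potential (roughly, the trace of the residual projected onto a coupled low-rank subspace) by a multiplicative factor depending on $r$ and the current residual. Iterating this contraction and optimizing the choice of coupling subspace yields the two logarithmic regimes in the minimum. Our contribution is therefore primarily to cleanly establish the equivalence in the first paragraph and verify that all relevant quantities (sampling law, update rule, error potential, spectral tail) translate exactly between the ID setting on $\Xb$ and the Nystr\"om setting on $\Xb\Xb^\top$.
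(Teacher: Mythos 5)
Your proposal takes essentially the same route as the paper: the paper's own justification for this proposition is precisely to observe that Algorithm~\ref{algo:pqr_id} applied to $\Xb$ is equivalent, in exact arithmetic, to RPCholesky applied to $\Xb\Xb^\top$, and then to cite Theorem~5.1 of \cite{chen2022randomly} as a black box. Your write-up merely spells out the identifications (sampling law, diagonal downdate, trace-error form of $\errskel{\Xb}{S}$, matching of $\eta_r$) that the paper leaves implicit, all of which check out.
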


Although random pivoting combines the strength of both adaptiveness and randomness and achieves an attractive skeleton complexity, the lack of {hardware efficiency} due to its sequential nature becomes a critical concern when considering empirical runtime.
\begin{remark}[Inefficiency of sequential updates]
    %
    {Given a sequence of matrix operations, it is well-known that an appropriate implementation using Level-3 BLAS, or matrix-matrix, operations will run more efficiently on modern processors than an optimal implementation using Level-2 or Level-1 BLAS~\cite{blackford2002updated}. This is largely due to the greater potential for the Level-3 BLAS to make more efficient use of memory caching in the processor.
    Notice that the computational bottleneck of \Cref{algo:pqr_id} with asymptotic complexity $O\rbr{ndk}$ consists of $k$ matrix-vector multiplications (Level-2 BLAS) with $\Xb$. This leads to the considerable slowdown of \Cref{algo:pqr_id} in practice.}     
\end{remark}

\subsection{Sketchy Pivoting}
Alternative to random pivoting, randomness can be combined with adaptiveness through randomized linear embedding (also known as sketching), which leads to the sketchy pivoting methods~\citep{martinsson2017householder,voronin2017efficient,dong2021simpler,2017_blockQR_ming_article}.
The general framework of sketchy pivoting~\citep[Algorithm 1]{dong2021simpler} consists of two stages. 
In the first stage, \emph{randomness is incorporated through sketching}, which serves two purposes. 
\begin{itemize}
    \item From the computational efficiency perspective, sketching reduces the data dimension\footnote{
        For applications like low-rank approximations and skeleton selection, constant oversampling like $l \ge k+10$ is usually sufficient in practice. 
        We refer the interested readers to \cite{halko2011finding,woodruff2014sketching,martinsson2020randomized} for a general review of sketching.
    } 
    from $d$ to $l = O(k)$ and transfers the $O(ndk)$ computational bottleneck from sequential greedy pivoting to input-sparsity-time matrix-matrix multiplications. 
    \item From the skeleton complexity perspective, applying randomized embeddings to the row space of $\Xb$ via sketching improves the empirical robustness of the subsequent greedy pivoting~\citep{trefethen1990average,dong2021simpler}. Intuitively, this is because the scarce adversarial inputs for greedy pivoting (\Cref{rmk:greedy_squared_norm_pivot}) are effectively negligible under randomization.
\end{itemize}

    

In the second stage, \emph{adaptiveness is utilized as greedy pivoting on the sketched sample matrix} $\Yb = \Xb\Omegab \in \R^{n \times l}$ to select skeletons. Both LUPP~\citep[Section 3.4]{golub2013matrix} and CPQR~\citep[Section 5.4.2]{golub2013matrix} on $\Yb$ cost $O(nk^2)$ asymptotically, whereas LUPP is remarkably faster in practice due to its superior parallelizability~\citep{dong2021simpler}. Intuitively, applying greedy pivoting on the top of sketching can be viewed as an analog of the squared-norm sampling with a slightly different squared-norm-based distribution.
\begin{remark}[Sketchy pivoting v.s. random pivoting]\label{rmk:sketchy_pivoting_rand_pivoting}
    For example, consider a Gaussian random matrix $\Omegab$ with $\iid$ entries from $\Ncal\rbr{0, 1/l}$. Each row $\yb_i = \Omegab^\top \xb_i$ of $\Yb$ is a Gaussian random vector with $\iid$ entries from $\Ncal(\b0, \nbr{\xb_i}_2^2 \Ib_l)$.
    When applying greedy squared-norm pivoting (CPQR) in the second stage, the first pivot is selected according to $s_1 \gets \argmax_{i \in [n]} \nbr{\xb_i}_2^2 Z_i$ where every $Z_i = \sum_{j=1}^l Y_{ij}^2$ is a $\chi^2_l$ random variable but $\csepp{Z_i}{i \in [n]}$ are dependent. The intuition behind sketchy pivoting is similar to random pivoting, but the randomness arises from the random variables $Z_i$, instead of sampling.

    In the extreme scenarios, when the variance of $\cbr{Z_i}_{i \in [n]}$ is negligible in comparison to the squared norms $\{\nbr{\xb_i}_2^2\}_{i \in [n]}$, sketchy pivoting behaves similarly to greedy pivoting. Meanwhile, when the variance of $\cbr{Z_i}_{i \in [n]}$ is much larger than differences among the squared norms, sketchy pivoting tends to behave like uniform sampling. Intuitively, there exists some $\cbr{Z_i}_{i \in [n]}$ with appropriate variance interpolating the two extreme cases such that sketchy pivoting mimics the behavior of squared-norm sampling and enjoys a nearly optimal skeleton complexity guarantee similar to \Cref{prop:sas_skeleton_complexity}.
\end{remark}

In practice, sketchy pivoting provides high-quality skeleton selection with comparable error $\errskel{\Xb}{S}$ to random pivoting (\Cref{algo:pqr_id}, \cf \Cref{fig:id_gmm_rpgp}), but much more efficiently thanks to the parallelizability of sketching~\citep{dong2021simpler}. 
However, as a major empirical limitation, sketchy pivoting is not {rank-adaptive} and requires prior knowledge of the target rank $k$.

\section{Robust Blockwise Random Pivoting}\label{sec:rbas}
In light of the existing skeleton selection methods summarized in \Cref{tab:adaptiveness_randomness_summary}, a \emph{{hardware-efficient}, {rank-adaptive}, and ID-revealing} skeleton selection algorithm that attains similar skeleton and asymptotic complexities as the sequential random pivoting raises as a desirable missing piece.

Blockwise random pivoting is a natural extension of its sequential variation \Cref{algo:pqr_id} that consists of matrix-matrix multiplications and is, therefore, {hardware-efficient}. In the kernel formulation (with an input kernel matrix $\Xb\Xb^\top$), {\cite[Algorithm 3]{chen2022randomly}} introduced a blocked version of RPCholesky that can be generalized as blockwise random pivoting (BRP---\Cref{algo:bg_pqr_id} with $\tau_b = 0$). 
{However, when the block size $b \in \mathbf{N}$ is taken to be large enough to fully realize the advantages of hardware efficiency, such plain BRP can have up to a $b$ times larger skeleton complexity than necessary due to the similar pitfall of squared-norm sampling instantiated in \Cref{ex:adversary_squared_norm_sampling} (\eg \Cref{fig:id_gmm_rpgp}).}

\begin{algorithm}[!ht]
\caption{Robust blockwise random pivoting (RBRP)}\label{algo:bg_pqr_id}
\begin{algorithmic}[1]
    \Require (a) $\Xb = \sbr{\xb_1,\cdots,\xb_n}^\top \in \R^{n \times d}$, 
    (b) $\tau = \rbr{1+\eps} \eta_r \in (0,1)$ (or $k \in \N$), 
    (c) Block size $b \in \N$,
    (d) $\tau_b \in [0,1)$---Tolerance for blockwise filtering (we choose $\tau_b = \frac{1}{b}$)
    \Ensure (a) $S = \cbr{s_1,\cdots,s_k} \subset [n]$, 
    (b) $\Lb \in \R^{n \times k}$,
    (c) $\pib \in \Sfrak_n$
    
    \State $\Lb^{(0)} \gets \b{0}_{n \times 0}$, $\Qb^{(0)} \gets \b{0}_{d \times 0}$, $\pib \gets [1,\cdots,n]$, $S \gets \emptyset$, $t = 0$
    \State $\db^{(0)} \in \R_{\ge 0}^n$ with $\db^{(0)}(i) = \nbr{\xb_i}_2^2 ~\forall~ i \in [n]$ \Comment{{$O(nd)$}}
    \While{$\nbr{\db^{(t)}}_1 > \tau \nbr{\db^{(0)}}_1$ (or $\abbr{S} < k$)}
        \State $t \gets t + 1$, $b \gets \min(b, k-\abbr{S})$
        \State Sample $S_t = \rbr{s_{\abbr{S}+1}, \cdots, s_{\abbr{S}+b}} \sim \db^{(t-1)} / \nbr{\db^{(t-1)}}_1$ without replacement \Comment{{$O(n)$}} 

        \State $\Vb \gets \Xb\rbr{S_t,:}^\top - \Qb^{(t-1)} \rbr{\rbr{\Qb^{(t-1)}}^\top \Xb\rbr{S_t,:}^\top} \in \R^{d \times b}$ \Comment{{$O(db\abbr{S})$}}
        \State $\Qb_\Vb, \Rb_\Vb, \pib_\Vb \gets \mathtt{qr}\rbr{\Vb,~ \t{``econ''},~ \t{``vector''}}$ \Comment{$\Qb_\Vb \in \R^{d \times b}$, $\Rb_\Vb \in \R^{b \times b}$, {$O\rbr{db^2}$}} 
        \State $b' \gets \max\csepp{i \in [b]}{\nbr{\Rb_\Vb\rbr{i:b, i:b}}_F^2 \ge \tau_b \cdot \nbr{\Rb_\Vb}_F^2}$ \Comment{{$O\rbr{b^2}$}}
        \State $S'_t \gets S_t\rbr{\pib_\Vb\rbr{1:b'}}$, $\Qb'_\Vb \gets \Qb_\Vb\rbr{:,1:b'}$
        \State $\Qb^{(t)} \gets \sbr{\Qb^{(t-1)}, \Qb'_\Vb} \in \R^{d \times \rbr{\abbr{S} + b'}}$ 

        \State $\pib \gets \mathtt{swap}\rbr{\pib, \abbr{S}+1:\abbr{S}+b', S'_t}$, $\pib_a \gets \pib(\abbr{S}+1:n)$
        \State $\wh\Lb^{(t)} \gets \b{0}_{n \times b'}$
        \State $\wh\Lb^{(t)}\rbr{\pib_a,:} \gets \Xb\rbr{\pib_a,:} \Qb'_\Vb$ \Comment{{$O\rbr{ndb'}$}}

        \State $S \gets S \cup S'_t$ 
        \State $\Lb^{(t)} \gets \sbr{\Lb^{(t-1)}, \wh\Lb^{(t)}} \in \R^{n \times \abbr{S}}$ 

        \State $\db^{(t)}(i) \gets 0 ~\forall~ i \in S$, $\db^{(t)}(i) \gets \db^{(t-1)}(i) - \nbr{\wh\Lb^{(t)}\rbr{i,:}}_2^2 ~\forall~ i \notin S$ \Comment{{$O(nb')$}}
    \EndWhile
    \State $k \gets \abbr{S}$, $\Lb \gets \Lb^{(t)}$
\end{algorithmic} 
\end{algorithm} 

To exemplify the pitfall of plain BRP, we present an adversarial input in \Cref{ex:gmm_pitfall_plain_bas} based on a tailored Gaussian mixture model (GMM).
\begin{example}[Pitfall of plain blockwise random pivoting]\label{ex:gmm_pitfall_plain_bas}
    For $n, d, k \in \N$ such that $n/k = m \in \N$, we draw $n$ points from a GMM with means $\Ccal = \cbr{\mub_j}_{j \in [k]}$, covariance $\Ib_d$, and cluster size $m$---$\Xcal = \cbr{\xb_i \in \R^d}_{i \in [n]} = \bigcup_{j=1}^{k} \Xcal_j$ where
    \begin{align*}\Xcal_j = \csepp{\xb_{m(j-1)+\iota} = \mub_j + \xib_\iota}{\mub_j = 10 j \cdot \eb_j,~ \xib_\iota \sim \Ncal\rbr{\b0_d, \Ib_d}~\iid~\forall~\iota \in [m]}.
    \end{align*}
    Consider a GMM data matrix $\Xb = \sbr{\xb_1,\cdots,\xb_n}^\top \in \R^{n \times d}$. Since the means in $\Ccal$ have distinct norms whose discrepancies dominate the covariance $\Ib_d$, $\Xcal$ can be partitioned into $k$ clusters $\cbr{\Xcal_j}_{j \in [k]}$ each containing $m$ points with distinct norms. 
    
    The best size-$k$ skeleton subset consists of exactly one point from each cluster $\Xcal_j$, while multiple points from the same cluster are redundant.
    However, the plain blockwise adaptiveness (random (BRP)/greedy pivoting (BGP)) tends to pick multiple points from the same cluster. For instance, with block size $b$, BGP can pick up to $\min\rbr{b,m}$ points in the same cluster. 
    Similar inefficiency also appears in BRP but is generally alleviated thanks to randomness (\Cref{fig:id_gmm_rpgp} (left)).
\end{example}

To resolve the challenges illustrated by \Cref{ex:gmm_pitfall_plain_bas}, we introduce a \emph{robust blockwise random pivoting (RBRP)} algorithm---\Cref{algo:bg_pqr_id}---that empirically achieves comparable skeleton complexities to the sequential adaptive methods (SRP and CPQR), as demonstrated in \Cref{fig:id_gmm_rpgp} (right), while exploiting the {hardware efficiency} of matrix-matrix multiplications.

\begin{remark}[Robust blockwise filtering]\label{rmk:robust_blockwise_filtering}
    The key step in \Cref{algo:bg_pqr_id} that improves the robustness of BRP is the robust blockwise filtering with tolerance $\tau_b \in [0,1]$---applying truncated CPQR locally to the small residuals of the selected candidates $\Vb \in \R^{d \times b}$:
    \begin{align*}
        \Vb\rbr{:,\pib_{\Vb}} = \Qb_{\Vb} \Rb_{\Vb} \approx \Qb_{\Vb}\rbr{:,1:b'} \Rb_{\Vb}\rbr{1:b',:}
    \end{align*}
    where the relative truncation error is upper-bounded by $\tau_b$
    \begin{align*}
        \nbr{\Vb\rbr{:,\pib_{\Vb}}-\Qb_{\Vb}\rbr{:,1:b'} \Rb_{\Vb}\rbr{1:b',:}}_F^2 = \nbr{\Rb_{\Vb}\rbr{b'+1:b,b'+1:b}}_F^2 < \tau_b \nbr{\Vb}_F^2.
    \end{align*}
    With small constant block sizes, the robust blockwise filtering based on local CPQR can be computed with negligible additional cost $O(db^2)$, despite the sequential nature of CPQR.
\end{remark}

{The worst-case asymptotic complexity of \Cref{algo:bg_pqr_id} is $O\rbr{ndk + bdk^2}$ (with a slightly larger lower-order term, \cf $O\rbr{ndk + dk^2}$ for SRP in \Cref{algo:pqr_id}) when each step of robust blockwise filtering in \Cref{algo:bg_pqr_id} only keeps a single skeleton.
In practice, robust blockwise filtering typically retains a reasonable portion of skeletons in each block, leading to a time complexity close to $O\rbr{ndk + dk^2}$ (see \Cref{subsec:id_accuracy_efficiency}).}
{The dominant cost $O(ndk)$ consists of $k$ hardware-efficient passes through $\Xb$ as matrix-matrix multiplications. Analogous to \Cref{algo:pqr_id}, the storage of $\Lb^{(t)}$ and $\Qb^{(t)}$ requires $O(nk)$ and $O(dk)$ memory. The adaptive nature of \Cref{algo:bg_pqr_id} further facilitates the rank-adaptiveness.}

\begin{table}[!h]
    \centering
    \vspace{-1em}
    \caption{Summary of abbreviations for the skeleton selection methods in the experiments.}\label{tab:abbrev_skeleton_selection}
    \begin{tabular}{c|cc}
    \toprule
    Abbreviation & Skeleton selection method & Algorithm \\
    \midrule
    CPQR & Sequential greedy pivoting & \cite[Section 5.4.2]{golub2013matrix} \\
    SqNorm & Squared-norm sampling & \eqref{eq:squared_norm_distribution}~\citep{deshpande2006matrix} \\
    DPP & DPP sampling & \cite[Equation 8]{belabbas2009spectral}\cite{kulesza2011k,GPBV19} \\
    SRP & Sequential random pivoting & \Cref{algo:pqr_id}~\citep{chen2022randomly} \\
    SkCPQR & Sketchy pivoting with CPQR & \cite[Algorithm 4]{voronin2017efficient} \\
    SkLUPP & Sketchy pivoting with LUPP & \cite[Algorithm 1]{dong2021simpler} \\
    BGP & Blockwise greedy pivoting & \Cref{rmk:blockwise_gp} with $\tau_b = 0$ \\
    BRP & Blockwise random pivoting & \Cref{algo:bg_pqr_id} with $\tau_b = 0$ \\
    \midrule
    RBGP & Robust blockwise greedy pivoting & \Cref{rmk:blockwise_gp} with $\tau_b = \frac{1}{b}$ \\
    \b{RBRP} & Robust blockwise random pivoting & \Cref{algo:bg_pqr_id} with $\tau_b = \frac{1}{b}$ \\
    \bottomrule
    \end{tabular}
\end{table}

\begin{remark}[Blockwise greedy pivoting]\label{rmk:blockwise_gp}
    For the completeness of comparison, in the experiments, we consider a variation of the (robust) blockwise random pivoting ((R)BRP)---(robust) blockwise greedy pivoting ((R)BGP). In \Cref{algo:bg_pqr_id}, (R)BGP replaces the sampling step,
    \begin{center}
        sample $S_t = \rbr{s_{\abbr{S}+1}, \cdots, s_{\abbr{S}+b}} \sim \db^{(t-1)} / \nbr{\db^{(t-1)}}_1$ without replacement,
    \end{center}
    with the task of choosing points corresponding to the top-$b$ probabilities greedily:
    \begin{center}
        select $S_t = \rbr{s_{\abbr{S}+1}, \cdots, s_{\abbr{S}+b}} \gets$ indices of the top-$b$ entries in $\db^{(t-1)}$.
    \end{center}
    With $\tau_b = 0$, BGP is reduced to CPQR. 
    For adversarial inputs like the one in \Cref{ex:gmm_pitfall_plain_bas}, BGP tends to suffer from far worse skeleton complexities than those of BRP (\Cref{fig:id_gmm_rpgp} (left)). 
\end{remark}

\begin{figure}[!ht]
    \includegraphics[width=\textwidth]{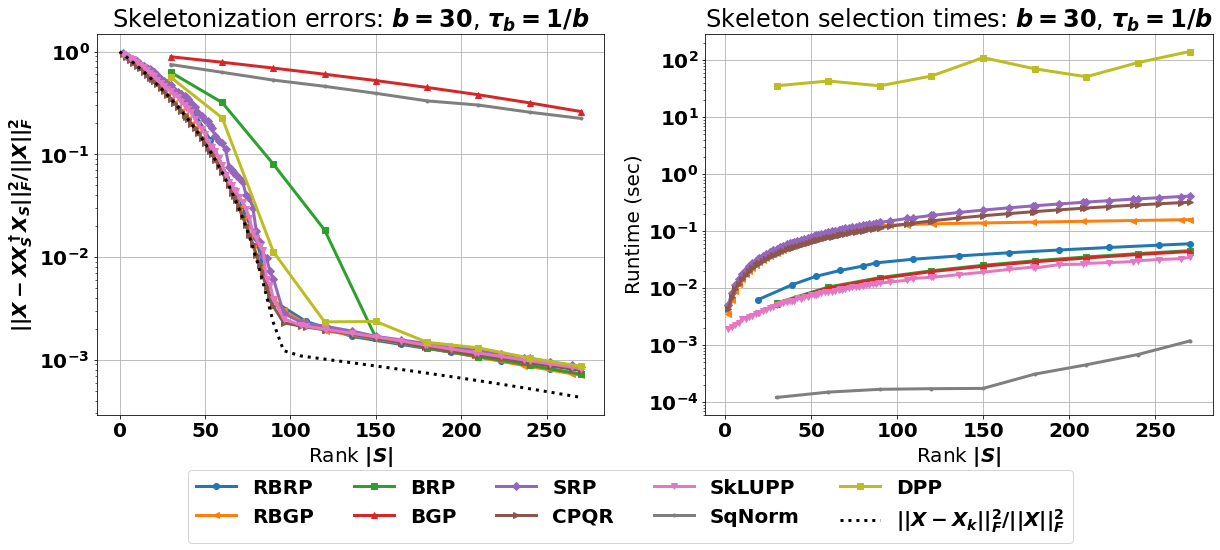}
    \caption{
        {The relative skeletonization error $\errskel{\Xb}{S}/\|\Xb\|_F^2$ and skeleton selection time (excluding interpolation matrix construction) of algorithms in \Cref{tab:abbrev_skeleton_selection} on the adversarial \texttt{GMM} matrix described in \Cref{subsec:data_matrices} constructed according to \Cref{ex:gmm_pitfall_plain_bas}. For the sketchy pivoting methods, we only show the results of SkLUPP, as SkCPQR is known to have similar skeleton complexities as SkLUPP but with higher runtimes~\citep{dong2021simpler}.}
    }\label{fig:id_gmm_rpgp}
    \vspace{-1em}
\end{figure}

{\Cref{fig:id_gmm_rpgp} concretizes the adversarial input in \Cref{ex:gmm_pitfall_plain_bas} and demonstrates the empirical effectiveness of RBRP for skeleton selection.
\begin{itemize}
    \item In particular, the skeletonization errors on the left unveil the failure of plain \b{BRP/BGP} in terms of large skeleton complexities due to the lack of adaptiveness. Moreover, greedy selection in BGP tends to severely exacerbate this problem. \b{SqNorm} can be viewed as plain BRP with block size $b=k$ and therefore suffers from similar large skeleton complexities as BRP/BGP, despite its low asymptotic complexity of $O(nd)$.
    \item \b{DPP} sampling attains relatively good accuracy even without adaptiveness, thanks to its nearly optimal skeleton complexity~\citep{belabbas2009spectral,guruswami2012optimal,chen2022randomly}, but with much higher runtimes as a trade-off (\eg, with \cite{GPBV19}, k-DPP takes about $10^3 \times$ longer than other algorithms in \Cref{tab:abbrev_skeleton_selection}).
    \item Providing even better skeleton complexities than DPP, \b{SRP} and \b{CPQR} achieve one of the best skeletonization errors thanks to their adaptiveness. However, the sequential nature of SRP and CPQR leads to higher runtimes than the blockwise methods.
    \item With robust blockwise filtering (\Cref{rmk:robust_blockwise_filtering}), \b{RBRP} achieves comparable skeleton complexities to SRP and CPQR, while maintaining the hardware efficiency of blockwise methods. Notice from the runtimes on the right that despite the comparable skeleton complexities to RBRP, \b{RBGP} is slower than RBRP on the adversarial \texttt{GMM} matrix. This is because RBGP with greedy selection tends to pick more redundant points in each block, which are later removed by the robust blockwise filtering.
    \item For skeleton selection only, \b{SkLUPP} provides the state-of-the-art performance --- one of the best skeletonization errors with the most competitive runtimes. However, the lack of rank-adaptiveness and ID-revealing property becomes a major drawback for SkLUPP as an ID algorithm (see \Cref{subsec:inexact_id_revealing_algo}).
\end{itemize}}

Beyond the \texttt{GMM} adversarial input, extensive numerical experiments in \Cref{sec:experiments} show that RBRP with $\tau_b = \frac{1}{b}$ empirically attains comparable skeleton complexities as sequential random pivoting, which leads to the following conjecture.
\begin{conjecture}[Skeleton complexity of RBRP]\label{conj:rbas_skeleton_complexity}
    RBRP (\Cref{algo:bg_pqr_id}) with $\tau_b = \frac{1}{b}$ has similar skeleton complexity as sequential random pivoting (\Cref{prop:sas_skeleton_complexity}) in expectation.
\end{conjecture}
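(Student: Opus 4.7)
The plan is to extend the analysis of sequential random pivoting (SRP, \Cref{prop:sas_skeleton_complexity})---which descends from the RPCholesky analysis of~\cite{chen2022randomly}---to the blockwise setting, treating each iteration of \Cref{algo:bg_pqr_id} as a ``super-step'' of SRP whose size is adaptively determined by robust filtering. The central claim to target is that with $\tau_b = 1/b$, the expected count $b_t$ of retained skeletons per block, summed over blocks, matches, up to constants, the count produced by SRP at the same residual accuracy. Intuitively, the filtering criterion $\nbr{\Rb_\Vb(i{:}b, i{:}b)}_F^2 \ge \tau_b \nbr{\Rb_\Vb}_F^2$ is calibrated so that a retained candidate plays the role of a sample SRP would have drawn and accepted from its \emph{updated} residual, while the rejected tail mimics mass that SRP would already have removed from its distribution before the corresponding step.

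First, I would set up a one-block trace recursion of the form
\begin{equation*}
    \E\bspar{\tr\rbr{{\Xb^{(t+1)}}^\top \Xb^{(t+1)}} \mid \Xb^{(t)}} \le \rbr{1-\alpha(b_t)} \tr\rbr{{\Xb^{(t)}}^\top \Xb^{(t)}} + \beta(b_t) \nbr{\Xb^{(t)} - \tsvd{\Xb^{(t)}}{r}}_F^2,
\end{equation*}
with coefficients $\alpha, \beta$ matching the per-step factors of SRP derived in Section~5 of~\cite{chen2022randomly}. Second, I would show that with $\tau_b = 1/b$, $\E[b_t]$ is at most a constant times the number of SRP steps needed to achieve the same trace reduction, via a Schur-complement/volumetric argument on the local QR factorization of $\Vb$ (the trailing-norm test captures precisely the pivots whose individual rank-one updates are not dominated by the collective effect of the already-selected ones). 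Third, the trace recursion and the bound on $\sum_t b_t$ would be combined through the same potential-function induction as in~\cite{chen2022randomly} to produce a skeleton complexity of the same functional form $r/\eps + r \log(1/(\eps\eta_r))$ as \Cref{prop:sas_skeleton_complexity}.

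The principal obstacle is handling the interaction between (i) sampling $b$ indices \emph{without replacement} from a \emph{fixed} within-block distribution $\db^{(t)}$ and (ii) the post-hoc CPQR filtering. The SRP analysis relies critically on the distribution being refreshed after every single draw; a naive bound that freezes the distribution across a block pays a multiplicative factor up to $b$ in the skeleton count---exactly the failure mode exhibited by plain BRP on the adversarial input of \Cref{ex:gmm_pitfall_plain_bas}. A proof must quantitatively credit the filtering for ``catching up'' the static distribution to its adaptive counterpart. A promising route is a conditional coupling between RBRP and an idealized sequential process in which each within-block sample is compared against a virtual acceptance test matching the trailing-norm criterion, so that RBRP's retained skeletons almost surely reduce the residual at least as fast as SRP's accepted samples. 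A secondary hurdle is the dependence among within-block samples induced by without-replacement draws, which may require a negative-association argument or a comparison lemma of Hoeffding type between with- and without-replacement distributions.
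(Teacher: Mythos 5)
Note first that the paper treats this statement as a \emph{conjecture}, and what follows it in the text is explicitly labeled a ``Rationale,'' not a proof: the paper compares RBRP's per-block selection $S'_t$ (local CPQR on $\Vb$ followed by the trailing-norm cut at $\tau_b=1/b$) to the set $S''_t$ of the same size that SRP would have produced by adaptive sampling from the same state, and argues heuristically that $\errskel{\Xb}{S \cup S'_t} \approx \errskel{\Xb}{S \cup S''_t}$ because $\frac{1}{b}\nbr{\Vb}_F^2$ serves as an overestimate of the residual norms of redundant points. Your proposal shares that central intuition, but pursues a genuinely different and far more structured route: a per-block trace recursion with SRP-matching coefficients, a Schur-complement/volumetric bound on $\E[b_t]$, and a potential-function induction in the spirit of the RPCholesky analysis. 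You also put your finger on the technical obstacle that the paper's rationale never confronts quantitatively: SRP's guarantee depends on refreshing the sampling distribution after every single draw, whereas each RBRP block draws $b$ indices without replacement from the \emph{frozen} $\db^{(t-1)}$ and filters only afterward, so a naive static-distribution bound pays a factor of $b$---exactly the plain-BRP failure mode. Your proposed coupling with an idealized sequential process, and the negative-association/Hoeffding-type comparison for without-replacement sampling, are the right kind of machinery and turn the paper's plausibility argument into a concrete roadmap.

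Two cautions on the plan as stated. First, the claim that the coupling would make RBRP's retained skeletons \emph{almost surely} reduce the residual at least as fast as SRP's accepted samples is almost certainly too strong; a per-block inequality in expectation together with a supermartingale or stopping-time argument is the realistic target. Second, the Schur-complement step that bounds $\E[b_t]$ must reconcile the local CPQR ordering (greedy on the within-block residual) with the squared-norm distribution SRP would use after each accepted draw; these can disagree precisely when the block contains near-duplicates---the adversarial regime---so the thresholding lemma cannot be purely deterministic and will have to use the randomness of the block draw. Until those two steps are pinned down the statement remains open, as the paper itself concedes by calling it a conjecture.
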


\begin{proof}[Rationale for \Cref{conj:rbas_skeleton_complexity}]
    Consider \Cref{algo:bg_pqr_id} at the $t$-th step with $\abbr{S}$ selected skeletons from the previous steps, $\abbr{S_t} = b$ candidates drawn from ${\db^{(t-1)}}/{\nbr{\db^{(t-1)}}_1}$, and $\abbr{S'_t} = b'$ remaining skeletons in $S_t$ that pass through the robust blockwise filtering (\Cref{rmk:robust_blockwise_filtering}).
    Now suppose we switch to sequential random pivoting after the $(t-1)$-th step and draw $b'$ skeletons adaptively, denoted as $S''_t$. 
    Then, the key rationale for \Cref{conj:rbas_skeleton_complexity} is that $\errskel{\Xb}{S \cup S'_t} \approx \errskel{\Xb}{S \cup S''_t}$ with (a reasonably) high probability when $\tau_b = \frac{1}{b}$.

    On this, we observe the close connection between the selection scheme of $S'_t$ and that of $S''_t$: 
    \begin{itemize}
        \item The robust blockwise filtering for $S'_t$ in RBRP adaptively selects the point with the maximum residual norm in each step (via CPQR on $\Vb$ in \Cref{algo:bg_pqr_id}), within a small subset $S_t$ of $b$ points sampled according to the squared-norm distribution over $[n] \setminus S$.
        \item The random pivoting for $S''_t$ in SRP adaptively selects the point according to the squared-norm distribution of the residual over the complement of the skeleton subset in each step.
    \end{itemize}
    Intuitively, every selection made for $S'_t$ and $S''_t$ yields similar decay in the skeletonization error if the squared norms of the residual matrix, corresponding to remaining points in $S_t$, are around the maxima in the complement of the skeleton subset so that these remaining points in $S_t$ can be selected by SRP with high probability\footnote{
        It is worth mentioning that, in contrast to SRP, RBRP does not recompute the entire residual matrix after each skeleton selection. Instead, within each block, RBRP computes the residual matrix locally for $\Xb\rbr{S_t,:}$ only, while updating the entire residual matrix at the end of each block via matrix-matrix multiplication.
    }. 
    Instead of computing the squared norms of the entire residual matrix explicitly for each selection as SRP (which leads to the undesired sequential updates), RBRP leverages the threshold $\tau_b = \frac{1}{b}$ to encourage the first $b'$ points selected by the robust blockwise filtering in each block to have top squared norms in the corresponding residual matrices. 
    Notice that with $\tau_b = \frac{1}{b}$, we enforce the sum of squared norms of the $b'$ points to be larger than the average squared norm of the original block: $\nbr{\Rb_\Vb\rbr{i:b, i:b}}_F^2 \ge \frac{1}{b} \nbr{\Vb}_F^2$.
    Since the squared norms of residuals are non-increasing under QR updates, $\frac{1}{b} \nbr{\Vb}_F^2$ serves as an overestimate for the squared norms of redundant points sampled in $S_t$.

    As toy examples, if $\Vb$ is rank deficient (\ie $\rank(\Vb) = b'' < b$), then there exists $b' \le b''$ such that $\nbr{\Rb_\Vb\rbr{b':b, b':b}}_F^2 \ge \frac{1}{b} \nbr{\Vb}_F^2$, but
    \begin{align*}
        0 = \nbr{\Rb_\Vb\rbr{b''+1:b, b''+1:b}}_F^2 \le \nbr{\Rb_\Vb\rbr{b'+1:b, b'+1:b}}_F^2 < \frac{1}{b} \nbr{\Vb}_F^2.
    \end{align*}
    Whereas if $\Vb \in \R^{d \times b}$ contains orthogonal columns with the same squared norm, then for all $i \in [b]$, we have $\nbr{\Rb_\Vb\rbr{i:b, i:b}}_F^2 \ge \abbr{\Rb_\Vb\rbr{b, b}}^2 = \frac{1}{b} \nbr{\Vb}_F^2$, and therefore $b'=b$.
\end{proof}

\section{Interpolation Matrix Construction}\label{sec:id_construction}

In this section, we consider the construction of the interpolation matrix $\Wb$ after skeleton selection. In particular, we are interested in leveraging information from the skeleton selection processes to construct $\Wb$ efficiently.

Since the optimal interpolation matrix solves the following least-square problem:
\begin{align}\label{eq:stable_id}
    \min_{\Wb \in \R^{n \times k}} \nbr{\Xb - \Wb \Xb_S}_F^2 
    \quad\Rightarrow\quad
    \Wb = \Xb \Xb_S^\pinv = \Xb \Xb_S^\top \rbr{\Xb_S \Xb_S^\top}^{-1},
\end{align}
we can compute such optimal $\Wb$ exactly in $O\rbr{ndk + dk^2 + nk^2}$ time via QR decomposition on $\Xb_S$. However, the dominant cost $O(ndk)$ requires additional passes through $\Xb$ and can be prohibitive after the skeleton selection stage.

\subsection{ID-revealing Algorithms}
With the output matrix $\Lb \in \R^{n \times k}$ from \Cref{algo:pqr_id,algo:bg_pqr_id}, the corresponding optimal interpolation matrix $\Xb \Xb_S^\dagger$ can be computed in $O(nk^2)$ time following \Cref{algo:id}.

\begin{algorithm}[!ht]
    \caption{Interpolation matrix construction (ID)}\label{algo:id}
    \begin{algorithmic}[1]
        \State $\Lb_1 \gets \Lb\rbr{\pib\rbr{1:k},:} \in \R^{k \times k}$, $\Lb_2 \gets \Lb\rbr{\pib\rbr{k+1:n},:} \in \R^{(n-k) \times k}$
        \State $\Wb \gets \b{0}_{n \times k}$, $\Wb\rbr{\pib\rbr{1:k},:} = \Ib_k$
        \State $\Wb\rbr{\pib\rbr{k+1:n},:} = \Lb_2 \Lb_1^{-1}$ \Comment{{$O\rbr{nk^2}$}}
\end{algorithmic} 
\end{algorithm} 

\begin{proposition}\label{prop:exact_id_revealing_algo}
    The sequential (\Cref{algo:pqr_id}) and blockwise (\Cref{algo:bg_pqr_id}) random pivoting, as well as their greedy pivoting variations, are ID-revealing (\Cref{def:id_revealing}).
\end{proposition}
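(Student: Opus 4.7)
The plan is to identify and exploit a single invariant that underlies all four algorithms (SRP, RBRP, and their greedy variants): throughout the loop, the matrix $\Lb^{(t)} \in \R^{n \times \abbr{S}}$ built by the algorithm satisfies $\Lb^{(t)} = \Xb \Qb^{(t)}$, where $\Qb^{(t)}$ has orthonormal columns spanning the row space of $\Xb_S$. Once this invariant is established, the ID-revealing property follows essentially by linear algebra.

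First I would verify the invariant $\Lb^{(t)} = \Xb \Qb^{(t)}$ inductively. In \Cref{algo:pqr_id}, line 8 defines $\vb = \xb_{s_t} - \Qb^{(t-1)} (\Qb^{(t-1)})^\top \xb_{s_t}$, the component of $\xb_{s_t}$ orthogonal to the previous skeletons; line 9 appends $\vb / \nbr{\vb}_2$ as the new orthonormal column, so $\Qb^{(t)}$ has orthonormal columns by construction. A direct computation shows $\ab^{(t)}(s_t) = \xb_{s_t}^\top \vb = \nbr{\vb}_2^2$ (since $(\Qb^{(t-1)})^\top \vb = 0$), so the scaled column appended to $\Lb$ equals $\Xb \vb / \nbr{\vb}_2 = \Xb \qb_t$, preserving the invariant. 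The same bookkeeping applies to \Cref{algo:bg_pqr_id}: each block's $\Qb'_\Vb$ consists of orthonormal columns orthogonal to $\Qb^{(t-1)}$ (by the Gram–Schmidt-style projection in line 6 followed by QR in line 7), and line 13 appends $\Xb(\pib_a,:) \Qb'_\Vb$ to $\Lb$, again consistent with $\Lb = \Xb \Qb$. The robust blockwise filtering in RBRP only truncates some columns of $\Qb_\Vb$, which does not violate the invariant. Finally, the greedy variants change only the selection rule, not the updates, so the invariant is untouched.

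Next, because the columns of $\Qb \in \R^{d \times k}$ are obtained by orthonormalizing the rows $\{\xb_{s_1}, \ldots, \xb_{s_k}\}$ against each other, they span the row space of $\Xb_S$. Consequently, $\Xb_S = \Xb_S \Qb \Qb^\top = \Lb_1 \Qb^\top$ with $\Lb_1 = \Lb(\pib(1{:}k),:)$. For any non-skeleton row $\xb_j$ with $j = \pib(k+i)$, one then has
\begin{equation*}
\Lb_2(i,:) \Lb_1^{-1} \Xb_S \;=\; \Lb_2(i,:) \Qb^\top \;=\; \xb_j \Qb \Qb^\top,
\end{equation*}
which is exactly the orthogonal projection of $\xb_j$ onto the row space of $\Xb_S$, i.e., the minimizer of $\min_{\wb} \nbr{\xb_j - \wb \Xb_S}_2^2$. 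Setting $\Wb(\pib(1{:}k),:) = \Ib_k$ and $\Wb(\pib(k+1{:}n),:) = \Lb_2 \Lb_1^{-1}$ thus recovers the optimal interpolation matrix $\Wb = \Xb \Xb_S^\dagger$ exactly, which is precisely \Cref{algo:id}.

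Finally, I would tally the cost of \Cref{algo:id}: the triangular solve $\Lb_2 \Lb_1^{-1}$ costs $O(nk^2)$ (assuming $\Lb_1$ is invertible, which is guaranteed when $\Xb_S$ has full row rank $k \le d$; otherwise one falls back on the pseudoinverse of $\Lb_1$ at the same asymptotic cost), and the permutation assignment is $O(nk)$, so the total is $O(nk^2)$, matching \Cref{def:id_revealing}. The main obstacle is not any single hard step but the careful bookkeeping of the scaling factors and the permutation $\pib$ across the two algorithms—in particular, confirming that the RBRP truncation via CPQR on $\Vb$ does not disturb the $\Lb = \Xb \Qb$ invariant, since only the retained columns $\Qb'_\Vb$ enter both $\Qb^{(t)}$ and the newly appended block of $\Lb^{(t)}$.
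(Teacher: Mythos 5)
Your proof is correct and follows essentially the same route as the paper: both identify that $\Lb = \Xb\Qb$ with $\Qb$ orthonormal and spanning the row space of $\Xb_S$, deduce $\Xb_S = \Lb_1\Qb^\top$ with $\Lb_1$ invertible, and conclude $\Wb = \Lb\Lb_1^{-1}$ is the exact optimal interpolation matrix computable in $O(nk^2)$. You are slightly more explicit than the paper in stating and inductively verifying the invariant $\Lb^{(t)} = \Xb\Qb^{(t)}$ (and in noting that the RBRP truncation only drops columns of $\Qb_\Vb$, leaving the invariant intact), but this is a matter of exposition rather than a different argument.
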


\begin{proof}[Proof of \Cref{prop:exact_id_revealing_algo}]
    We first observe that the difference between sequential/blockwise random v.s. greedy pivoting lies only in the skeleton subset $S$. That is, assuming the same skeleton selection $S$, the resulting matrices $\Lb$ from random and greedy pivoting would be the same. Therefore, it suffices to show that \Cref{prop:exact_id_revealing_algo} holds for random pivoting in \Cref{algo:pqr_id,algo:bg_pqr_id}.

    For both \Cref{algo:pqr_id,algo:bg_pqr_id}, recall that $\Lb \gets \Lb^{(t)}$. We observe that $\Lb \Qb^{(t)\top}$ provides a rank-$k$ approximation for $\Xb$: $\nbr{\Xb - \Lb \Qb^{(t)\top}}_F^2 = \nbr{\db^{(t)}}_1 = \errskel{\Xb}{S} < \tau \nbr{\Xb}_F^2$. We will show that $\Lb \Qb^{(t)\top} = \Xb \Xb_S^\dagger \Xb_S$ in exact arithmetic.
    
    Then, with $\Lb_1 \gets \Lb\rbr{\pib\rbr{1:k},:}$ and $\Lb_2 \gets \Lb\rbr{\pib\rbr{k+1:n},:}$ as in \Cref{algo:id}, by constructing, $\Lb_2 \Qb^{(t)\top} = \Xb\rbr{\pib\rbr{k+1:n},:}$ and $\Lb_1 \Qb^{(t)\top} = \Xb_S = \Xb\rbr{\pib\rbr{1:k},:}$ where $\Lb_1 \in \R^{k \times k}$ is invertible as $\Xb_S$ consists of linearly independent rows. Therefore, $\Xb_S^\dagger = \Qb^{(t)} \Lb_1^{-1}$, and the optimal interpolation matrix $\Wb = \Xb \Xb_S^\dagger$ can be expressed up to permutation as
    \begin{align*}
        \Wb\rbr{\pib,:} = \Xb\rbr{\pib,:}\Xb_S^\dagger = \bmat{\Lb_1 \Qb^{(t)\top} \\ \Lb_2 \Qb^{(t)\top}} \Qb^{(t)} \Lb_1^{-1} = \bmat{\Lb_1 \\ \Lb_2} \Lb_1^{-1} = \bmat{\Ib_k \\ \Lb_2 \Lb_1^{-1}}.
    \end{align*}
    The computation of $\Lb_2 \Lb_1^{-1}$ involves solving a small $k \times k$ system $(n-k)$ times, which takes $O(nk^2)$ time in general.
    Overall, given $\Lb$ from \Cref{algo:pqr_id,algo:bg_pqr_id}, we have that \Cref{algo:id} constructs the optimal interpolation matrix $\Wb = \Xb \Xb_S^\dagger = \Lb \Lb_1^{-1}$.
\end{proof}

Moreover, when $\Lb_1$ is well-conditioned, the special lower triangular structure of $\Lb_1$ can be leveraged to further accelerate the construction of $\Wb$, \ie $\Lb_2 \Lb_1^{-1}$ can be evaluated via backward substitution in $O\rbr{(n-k)k^2}$ time. 
However in practice, $\Lb_1$ from the blockwise algorithm \Cref{algo:bg_pqr_id} with a reasonably large block size tends to be ill-conditioned, as elaborated in \Cref{rmk:block_cholid_triangularization}.

\begin{remark}[Stable evaluation of $\Lb_2\Lb_1^{-1}$ for (R)BRP]\label{rmk:block_cholid_triangularization}
    In \Cref{algo:bg_pqr_id}, the resulting $\Lb_1 = \Lb\rbr{\pib\rbr{1:k},:}$ tends to be ill-conditioned, especially with a large block size $b$. For numerical stability, instead of using backward substitution, we evaluate $\Lb_1^{-1}$ via truncated SVD with small singular value clipping (\ie truncating singular values below a given tolerance, \eg $10^{-12}$ in our experiments, and their associated singular vectors), which takes $O\rbr{nk^2 + k^3}$ time. We emphasize that compared to backward substitution, computing the pseudoinverse via truncated SVD of the small $k \times k$ matrix $\Lb_1$ has negligible additional cost in the common low-rank approximation setting with $k \ll n$.
\end{remark}

\subsection{Oversampled Sketchy ID for Sketchy Pivoting}\label{subsec:inexact_id_revealing_algo}
In contrast to random pivoting, sketchy pivoting~\citep[Algorithm 1]{dong2021simpler} fails to be ID-revealing, intuitively due to the loss of information in the sketching process.
Concretely, applying \Cref{algo:id} on sketchy pivoting leads to a sketched version of \eqref{eq:stable_id}:
\begin{align}\label{eq:sketched_least_square}
    \min_{\Wb \in \R^{n \times k}} \nbr{\Xb\wh\Omegab - \Wb \Xb_S\wh\Omegab}_F^2 
    \quad\Rightarrow\quad
    \Wb = \Xb \wh\Omegab \rbr{\Xb_S \wh\Omegab}^\dagger,
\end{align}
where $\wh\Omegab = \Omegab\rbr{:,1:k} \in \R^{d \times k}$ consists of the first $k$ columns in the randomized linear embedding $\Omegab \in \R^{d \times l}$ drawn in sketchy pivoting.
Unfortunately, such sketched least-square estimation~\citep{woodruff2014sketching,raskutti2016statistical,dobriban2019asymptotics} is known to be suboptimal as long as $l < d$~\citep{pilanci2016iterative}. 
Specifically for interpolation matrix construction, \eqref{eq:sketched_least_square} without oversampling leads to large interpolation error $\errid{\Xb}{S}{\Wb} \gg \errskel{\Xb}{S}$, as numerically illustrated in \Cref{subsec:id_accuracy_efficiency} (\cf SkLUPP-ID in \Cref{fig:id_gmm} for $\errid{\Xb}{S}{\Wb}$ v.s. SkLUPP in \Cref{fig:id_gmm_rpgp} for $\errskel{\Xb}{S}$ (right)).

\begin{algorithm}[!ht]
    \caption{Oversampled sketchy interpolation matrix construction (OSID)}\label{algo:fsid}
    \begin{algorithmic}[1]
        \State (Draw a randomized linear embedding $\Omegab \in \R^{d \times l}$ and compute $\Yb \gets \Xb \Omegab \in \R^{n \times l}$)
        \State $\Qb, \Rb \gets \mathtt{qr}\rbr{\Yb\rbr{\pib\rbr{1:k},:}^\top, \t{``econ''}}$ \Comment{$\Qb \in \R^{l \times k},~ \Rb \in \R^{k \times k}$, {$O\rbr{lk^2}$}}
        \State $\Wb \gets \b{0}_{n \times k}$, $\Wb\rbr{\pib\rbr{1:k},:} = \Ib_k$
        \State $\Wb\rbr{\pib\rbr{k+1:n},:} \gets \rbr{\Yb\rbr{\pib\rbr{k+1:n},:} \Qb} \Rb^{-
        \top}$ \Comment{{$O\rbr{(n-k) \rbr{lk + k^2}}$}}
\end{algorithmic} 
\end{algorithm} 

As an effective remedy, the oversampled sketchy ID (OSID) in \Cref{algo:fsid} generalizes \Cref{algo:id} by increasing the sample size beyond $k$ (\ie oversampling), which is shown to considerably alleviate such suboptimality of \eqref{eq:sketched_least_square} in interpolation error $\errid{\Xb}{S}{\Wb}$ (\cf \Cref{subsec:id_accuracy_efficiency}, SkLUPP-ID v.s. SkLUPP-OSID in \Cref{fig:id_gmm,fig:id_mnist,fig:id_cifar10,fig:id_Gaussian_exp,fig:snn_a10,fig:helmholtz}). Concretely, with a moderate multiplicative oversampling $l = \Theta(k) > k$ (\eg $l = 3k$ is usually sufficient in practice, see \Cref{subsec:id_accuracy_efficiency}), \Cref{algo:fsid} can be expressed as
\begin{align}\label{eq:fsid}
\t{OSID}: \qquad
\begin{split}
    &\underset{k \times l}{\Yb_S} = \Yb\rbr{S,:}, \qquad \underset{l \times k}{\Yb_S^\top} = \underset{l \times k}{\Qb}\ \underset{k \times k}{\Rb} \\
    &\Wb = \Yb \Yb_S^\dagger = \rbr{\Yb \Qb} \Rb^{-\top} = \Xb \rbr{\Omegab \Omegab^\dagger} \Xb_S^\pinv 
\end{split}
\end{align}
where $\E_{\Omegab}\sbr{\Wb} = \Xb \Xb_S^\dagger$ is an unbiased estimate for the optimal interpolation matrix when the embedding $\Omegab$ is isotropic, \ie, $\E_{\Omegab}\sbr{\Omegab \Omegab^\dagger} = \Ib_d$. 

\Cref{algo:fsid} takes $O(nlk + nk^2 + lk^2) = O(nk^2)$ time. Compared to \Cref{algo:id} without oversampling, the dominant cost $O(nlk)$ increases proportionally to the oversampling factor $l/k$. Nevertheless, such additional cost is affordable in practice (\cf \Cref{fig:id_gmm,fig:id_mnist,fig:id_cifar10,fig:id_Gaussian_exp,fig:snn_a10,fig:helmholtz}) thanks to the {hardware efficiency} of matrix-matrix multiplications~\citep{goto2008high}.

{
While sketchy pivoting without oversampling in \eqref{eq:sketched_least_square} is not ID-revealing, the existing theories for sketch-and-solve (see \eg \cite[\S 2.5]{woodruff2014sketching}) implies that OSID in \eqref{eq:fsid} with Gaussian embedding and a multiplicative oversampling $l = \Theta(k)$ is $\gamma$-ID-revealing (recall \Cref{def:id_revealing}) for $\gamma = (1+O(\sqrt{k/l}))^2$ with a constant probability:
\begin{proposition}[Sketchy pivoting + OSID is $\gamma$-ID-revealing]\label{prop:fsid}
    Selecting a skeleton subset $S$ of size $k$ via sketchy pivoting~\citep[Algorithm 1]{dong2021simpler} outputs
    (i) a sketched sample matrix $\Yb = \Xb \Omegab \in \R^{n \times l}$ constructed by a Gaussian embedding $\Omegab \in \R^{d \times l}$ with $l > k$ and $\iid$ entries from $\Ncal(0,1/l)$, along with 
    (ii) a permutation $\pib \in \Sfrak_n$ that characterizes the skeleton subset $S = \pib\rbr{1:k}$.
    For any $\delta \in (0,1)$, when $l \gtrsim k + \log(1/\delta)$, with probability at least $1-\delta$, \Cref{algo:fsid} constructs an interpolation matrix $\Wb$ that satisfies $\errid{\Xb}{S}{\Wb} \leq (1 + O(\sqrt{\delta^{-1} k/l}))^2 \errskel{\Xb}{S}$ in $O(nkl)$ time.
\end{proposition}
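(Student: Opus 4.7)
The plan is to reduce the $\gamma$-ID-revealing bound to a moment estimate for the Moore--Penrose inverse of a $k \times l$ Gaussian matrix. Let $\Wb^\star = \Xb\Xb_S^\pinv$ and $\Eb = \Xb - \Wb^\star\Xb_S$, so $\errskel{\Xb}{S} = \|\Eb\|_F^2$ and $\Eb\Xb_S^\top = 0$ by the normal equations. Under the (probability-one, whenever $l \geq k$) event that $\Xb_S\Omegab$ has full row rank, the OSID formula from \eqref{eq:fsid} gives $\Wb - \Wb^\star = \Eb\Omegab\,(\Xb_S\Omegab)^\pinv$. Since each row of $\Eb$ lies in the orthogonal complement of the row space of $\Xb_S$, a Pythagorean identity (the cross term vanishes because $\Xb_S\Eb^\top = 0$) yields
\begin{equation*}
\errid{\Xb}{S}{\Wb} \;=\; \errskel{\Xb}{S} \;+\; \bigl\|\Eb\,\Omegab\,(\Xb_S\Omegab)^\pinv \Xb_S\bigr\|_F^2.
\end{equation*}
It therefore suffices to bound the second term by a $(1 + O(\sqrt{\delta^{-1}k/l}))^2 - 1$ multiple of $\errskel{\Xb}{S}$ with probability at least $1 - \delta$.

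Next, I would expose the Gaussian structure using the thin SVD $\Xb_S = \Ub_S\Sigmab_S\Vb_S^\top$ and an orthonormal completion $\Vb_S^\perp \in \R^{d \times (d-k)}$. Setting $\Gb = \Vb_S^\top\Omegab \in \R^{k \times l}$ and $\Gb_\perp = \Vb_S^{\perp\top}\Omegab \in \R^{(d-k) \times l}$, rotational invariance makes $\Gb$ and $\Gb_\perp$ independent matrices with i.i.d.\ $\Ncal(0,1/l)$ entries. A short calculation using the factorization gives $(\Xb_S\Omegab)^\pinv\Xb_S = \Gb^\pinv\Vb_S^\top$, and the identity $\Eb\Vb_S = 0$ together with the orthonormality of $\Vb_S$ reduces the error term to
\begin{equation*}
\bigl\|\Eb\,\Omegab\,(\Xb_S\Omegab)^\pinv\Xb_S\bigr\|_F^2 \;=\; \bigl\|\Eb_\perp\, \Gb_\perp\, \Gb^\pinv \bigr\|_F^2,
\qquad \Eb_\perp := \Eb\Vb_S^\perp,
\end{equation*}
where $\|\Eb_\perp\|_F = \|\Eb\|_F$ by the orthogonal decomposition.

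The main calculation is then a standard two-step moment estimate. Conditioning on $\Gb$ and applying the identity $\E[\Gb_\perp^\top M \Gb_\perp] = (\tr M)\,\Ib_l / l$ for any fixed $M \succeq 0$ (which follows from the entries of $\Omegab$ having variance $1/l$) gives $\E\bigl[\|\Eb_\perp\Gb_\perp\Gb^\pinv\|_F^2 \mid \Gb\bigr] = \|\Eb\|_F^2 \,\|\Gb^\pinv\|_F^2/l$. Using independence of $\Gb$ and $\Gb_\perp$ together with the inverse-Wishart moment $\E\|\Gb^\pinv\|_F^2 = lk/(l-k-1)$ (valid for $l \geq k+2$), I obtain
\begin{equation*}
\E\bigl\|\Eb_\perp \Gb_\perp \Gb^\pinv\bigr\|_F^2 \;=\; \frac{k}{l-k-1}\,\errskel{\Xb}{S}.
\end{equation*}
Markov's inequality then yields $\errid{\Xb}{S}{\Wb} \leq \bigl(1 + k/(\delta(l-k-1))\bigr)\errskel{\Xb}{S}$ with probability at least $1-\delta$, which is of the claimed form $(1 + O(\sqrt{\delta^{-1}k/l}))^2$ once $l \gtrsim k + \log(1/\delta)$. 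The runtime bound follows by inspecting \Cref{algo:fsid}: the $\mathtt{qr}$ of the $l \times k$ skeleton sketch costs $O(lk^2)$ and forming $(\Yb(\pib(k{+}1{:}n),:)\,\Qb)\Rb^{-\top}$ is dominated by the $O(nkl)$ matrix--matrix product.

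The step I expect to be the main obstacle is a subtle independence issue: as written, the embedding $\Omegab$ used to define $\Yb$ is the same one that determined the skeleton $S$ via sketchy pivoting, so $\Vb_S$ is in principle correlated with $\Omegab$ and the rotational-invariance reduction above is not automatic. The simplest resolution is to interpret the (parenthetical) Step~1 of \Cref{algo:fsid} as drawing a \emph{fresh} Gaussian embedding independent of the one used for skeleton selection, which preserves the proof verbatim; otherwise one must argue that the conditional distribution of $\Omegab$ given $S$ retains sufficient Gaussianity in the directions orthogonal to $\Vb_S$ for the moment calculation to remain valid.
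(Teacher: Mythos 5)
Your proof is correct and takes a genuinely different route from the paper's. The paper treats the statement as a corollary of the sketch-and-solve framework: it cites the Johnson--Lindenstrauss transform and JL second-moment properties of the embedding (Theorem 23 and Definitions 3, 12 of Woodruff's monograph) to get $\|\Wb\Xb_S - \Xb\Xb_S^\dagger\Xb_S\|_F \le \eps\|\Xb-\Xb\Xb_S^\dagger\Xb_S\|_F$ as a black box, then instantiates the parameters for a Gaussian embedding. You instead give a self-contained Gaussian computation: the Pythagorean decomposition $\errid{\Xb}{S}{\Wb} = \errskel{\Xb}{S} + \|\Eb\Omegab(\Xb_S\Omegab)^\dagger\Xb_S\|_F^2$, a change of basis exploiting rotational invariance to split $\Omegab$ into independent blocks $\Gb$, $\Gb_\perp$, a conditional second-moment identity, the inverse-Wishart trace moment $\E\tr((\Gb\Gb^\top)^{-1}) = lk/(l-k-1)$, and Markov's inequality. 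Your approach buys elementariness, transparency, and in fact a slightly stronger bound --- $1 + \delta^{-1}k/(l-k-1)$ rather than $(1+O(\sqrt{\delta^{-1}k/l}))^2$, with no genuine need for the $\log(1/\delta)$ term --- at the cost of being Gaussian-specific, whereas the paper's citation-based route is deliberately chosen because it extends immediately to sparse and LESS embeddings, as the remark following the proposition emphasizes.

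On the independence concern you flag at the end: you are right that it is a real subtlety, and it is worth noting that the paper's own proof has exactly the same gap. The sketch-and-solve guarantee from Woodruff's Theorem~23 also requires the embedding $\Omegab$ to be independent of the least-squares problem $(\Xb_S, \Xb)$, yet $S$ is itself a function of $\Omegab$ through the greedy pivoting on $\Yb = \Xb\Omegab$. The paper does not acknowledge or resolve this; your explicit identification of the issue and the suggested fix (read the parenthetical Step~1 of the OSID algorithm as drawing a fresh embedding, which costs an extra $O(ndl)$ pass through $\Xb$) is a genuine improvement in rigor over what appears in the paper. The alternative you hint at --- a conditional-Gaussianity argument given $S$ --- would be harder, since the event ``greedy pivoting on $\Xb\Omegab$ selects $S$'' is a complicated condition on $\Omegab$ that does not obviously preserve the rotational symmetry in the directions orthogonal to $\Vb_S$.

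One small calculational point: the inverse-Wishart moment requires $l \ge k+2$ for finiteness, which you correctly note, and the proposition's hypothesis $l \gtrsim k + \log(1/\delta)$ comfortably subsumes this; nothing to fix, just worth stating so the domain of validity is explicit.
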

\begin{proof}[Proof of \Cref{prop:fsid}]
    The proof of \Cref{prop:fsid} is a direct generalization of \cite[Theorem 23]{woodruff2014sketching}. 
    The key fact is that for a $(1/2, \delta/2, k)$-JLT~\citep[Definition 3]{woodruff2014sketching} $\Omegab \in \R^{d \times l}$ with $(\Theta(\eps/\sqrt{k}), \delta/2)$-JL second moment property~\citep[Definition 12]{woodruff2014sketching} for some $\eps > 0$, \eqref{eq:fsid} provides an interpolation matrix $\Wb$ that satisfies $\|\Wb\Xb_S - \Xb\Xb_S^\dagger\Xb_S\|_F \le \eps \|\Xb - \Xb\Xb_S^\dagger\Xb_S\|_F$ (see \eg the proofs of \cite[Theorem 23]{woodruff2014sketching} or \citep[Lemma C.5]{dong2024sketchy}), and therefore, $\errid{\Xb}{S}{\Wb} \leq (1 + \eps)^2 \errskel{\Xb}{S}$.
    Then, it is sufficient to recall that a Gaussian embedding $\Omegab \in \R^{d \times l}$ with $l \gtrsim k + \log(1/\delta)$ is a $(1/2, \delta/2, k)$-Johnson-Lindenstrauss transform~\citep[Theorem 6]{woodruff2014sketching} and satisfies the $(\Theta(\eps/\sqrt{k}), \delta/2)$-JL second moment property when $l \gtrsim \delta^{-1} k / \eps^2$.
\end{proof}
Notice that \Cref{prop:fsid} can be easily extended to various structured randomized linear embeddings like the sparse embedding in \cite{cohen2016nearly} and the LESS embedding in \cite{derezinski2021sparse,chenakkod2024optimal} that offer better efficiency for constructing the sample matrix $\Yb$. 
}


\section{Experiments}\label{sec:experiments}
{The experiments in \Cref{fig:id_gmm_rpgp,fig:id_gmm,fig:id_mnist,fig:id_cifar10,fig:id_Gaussian_exp,fig:snn_a10,fig:helmholtz} are conducted on a 14-core Apple M3 processor, while experiments in \Cref{tab:gpu_runtime} are conducted on an Nvidia V100 GPU.}

\subsection{Data Matrices}\label{subsec:data_matrices}
We compare accuracy and efficiency of various ID algorithms on different types of data matrices, including {natural data matrices from image datasets and integral operators, as well as synthetic matrices with varied spectra}, outlined as follows. 
\begin{enumerate}
    \item We construct a synthetic data matrix $\Xb \in \R^{2000 \times 500}$ based on the adversarial GMM example described in \Cref{ex:gmm_pitfall_plain_bas} with $k=100$ cluster centers, denoted as \texttt{GMM}.
    \item Recall that the MNIST testing set~\citep{lecun1998mnist} consists of $10,000$ images of hand-written digits from $0$ to $9$. We denote \texttt{MNIST} as a data matrix consisting of $n = 1000$ random images sampled uniformly from the MNIST testing set where each row contains a flattened and \emph{normalized} $28 \times 28$ image such that $d = 784$. The nonzero entries take approximately $20\%$ of the matrix.
    \item The CIFAR-10 dataset~\citep{krizhevsky2009learning} consists of $60,000$ colored images of size $32 \times 32 \times 3$.
    We denote \texttt{CIFAR-10} as a data matrix consisting of $n = 1000$ random images sampled uniformly from the CIFAR-10 dataset where each row contains a flattened and \emph{normalized} image such that $d = 3072$. 
    \item Let \texttt{Gaussian-exp} be a random dense data matrix of size $1000 \times 1000$ with exponential spectral decay. We construct \texttt{Gaussian-exp} from its SVD $\Xb = \Ub \Sigmab \Vb^\top$ where $\Ub, \Vb \in \R^{1000 \times 1000}$ are random unitary matrices drawn from the Haar measure, and $\Sigmab = \diag\rbr{\sigma_1,\cdots,\sigma_{1000}}$ where $\sigma_i = 1$ for all $i \le 100$ and $\sigma_i = \max\cbr{0.8^{i-100}, 10^{-5}}$ for all $i > 100$. 
    \item \texttt{SNN} is a sparse non-negative (SNN) matrix~\citep{sorensen2016deim} of size $1000 \times 1000$ generated by
    \begin{equation}
        \label{eq:snn-def}
        \Xb = \sum_{i=1}^{100} \frac{10}{i} \ub_i \vb_i^T + \sum_{i=101}^{1000} \frac{1}{i} \ub_i \vb_i^T
    \end{equation}
    where $\Ub = \sbr{\ub_1,\cdots,\ub_{1000}}$ and $\Vb = \sbr{\vb_1,\cdots,\vb_{1000}}$ are $1000 \times 1000$ random sparse matrices with non-negative entries and sparsity $0.1$.
    \item {Let \texttt{Helmholtz} be a matrix $\Xb$ induced by the Helmholtz kernel:
    \begin{align*}
        G(\xb,\yb) = \exp\rbr{i \kappa \nbr{\xb - \yb}_2}/\rbr{4\pi \nbr{\xb - \yb}_2} \quad \t{with} \quad \kappa = 5.5.
    \end{align*}
    Consider a set of source points $\Scal$ consisting of $15 \times 15 \times 15$ Clenshaw-Curtis quadrature nodes in a 3D cube of size $2 \times 2 \times 2$ centered at the origin, along with a target set $\Tcal$ containing $2000$ uniformly distributed points on the surface of a 3D sphere with radius $3$, also centered at the origin. We construct $\Xb$ of size $3375 \times 2000$ by evaluating the Helmholtz kernel on $\Scal \times \Tcal$ such that $\Xb_{ij} = G(\xb_i,\yb_j)$ for every $\xb_i \in \Scal$ and $\yb_j \in \Tcal$.}
\end{enumerate}

{It is worth highlighting that ID on natural data like \texttt{MNIST}, \texttt{CIFAR-10}, and \texttt{Helmholtz} is closely related to practical applications like data selection and fast direct solvers.}
\begin{remark}[ID for data selection]\label{rmk:id_data_selection}
    {ID on a training dataset like \texttt{MNIST} and \texttt{CIFAR-10} can be viewed as an idealized data selection problem~\citep{alaoui2015fast,dong2024sketchy}. For example, consider data selection for an overparametrized ridge regression problem without noise: given a full dataset $(\Xb,\yb) \in \R^{n \times d} \times \R^n$ satisfying $\yb = \Xb\thetab_*$ for some unknown ground truth $\thetab_* \in \SSS^{d-1}$, we aim to approximate $\thetab_*$ via $\wh\thetab = \argmin_{\thetab \in \R^d} \|\Xb_S\thetab - \yb_S\|_2^2 + \alpha \|\thetab\|_2^2$ with some $\alpha > 0$ on a selected data subset $(\Xb_S,\yb_S) \in \R^{k \times d} \times \R^k$ of size $k = \abbr{S} \le d$. 
    In this setting, the skeletonization error of ID, $\errskel{\Xb}{S}$, is closely related to the generalization error of data selection: 
    \begin{align*}
        \t{as}\ \alpha \to 0, \quad \|\Xb(\wh\thetab - \thetab_*)\|_2^2 = \|(\Xb\Xb_S^\dagger \Xb_S - \Xb)\thetab_*\|_2^2 \le \errskel{\Xb}{S}.
    \end{align*}
    Beyond linear models like ridge regression, this connection can be extended to the more general finetuning problems for nonlinear models (like neural networks) by replacing $\Xb$ with the high-dimensional representations or model gradients evaluated at the data~\citep{dong2024sketchy}.}
\end{remark}

\begin{remark}[ID for fast direct solvers]\label{rmk:id_fast_direct_solvers}
    {In fast direct solvers for linear elliptic PDEs, ID is a widely used tool for compressing certain off-diagonal blocks of a rank-structured matrix~\citep{martinsson2005fast,gillman2012direct,ho2012fast,ho2013hierarchical,corona2015n,minden2017recursive}. In this setting, both a high-quality skeleton selection and an accurate interpolation matrix construction are crucial for attaining a nearly minimum compression error. The ID experiments on the \texttt{Helmholtz} matrix are designed to model a compression step in the state-of-the-art fast direct solvers for Lippmann-Schwinger equation.}
\end{remark}

\subsection{Accuracy and Efficiency of ID}\label{subsec:id_accuracy_efficiency}
In \Cref{fig:id_gmm,fig:id_mnist,fig:id_cifar10,fig:id_Gaussian_exp,fig:snn_a10,fig:helmholtz}, we compare accuracy (left) and efficiency (right) of different ID algorithms on data matrices in \Cref{subsec:data_matrices}.

\begin{figure}[!ht]
    \includegraphics[width=\textwidth]{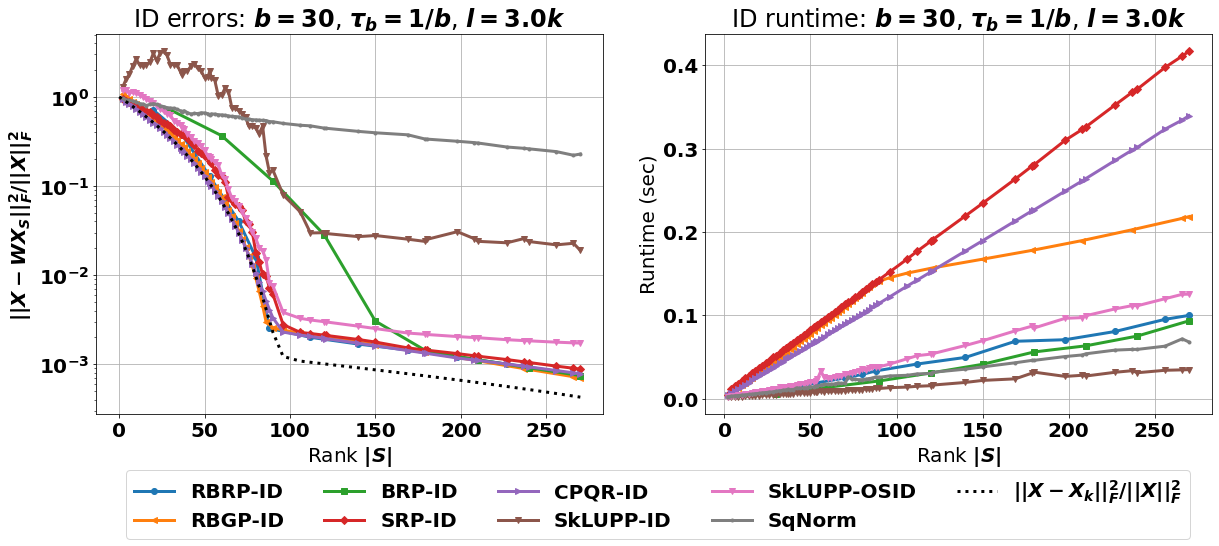}
    \caption{
        {Relative interpolation error and total runtime of ID algorithms on the \texttt{GMM} adversarial input (\Cref{ex:gmm_pitfall_plain_bas}). Recall from \Cref{fig:id_gmm_rpgp} that in lack of adaptiveness, methods like SqNorm and BRP tend to suffer from suboptimal skeleton complexities on \texttt{GMM}. }
    }\label{fig:id_gmm}
    \vspace{-1em}
\end{figure}

Taking an integrated view of ID including skeleton selection and interpolation matrix construction, we consider the following relative ID error\footnote{
    All error measurements are normalized with $\nbr{\Xb}_F^2$ to relative errors.
} and runtime measurements:
\begin{enumerate}[label=(\alph*)]
    \item For ID-revealing algorithms (\ie except for sketchy pivoting and sampling), we plot the runtimes and interpolation errors $\errid{\Xb}{S}{\Wb}$ with $\Wb$ from \Cref{algo:id} (\b{ID}), which are equal to the corresponding skeletonization errors $\errid{\Xb}{S}{\Wb} = \errskel{\Xb}{S}$.
    \item For sketchy pivoting, we plot the runtimes and interpolation errors $\errid{\Xb}{S}{\Wb}$ with $\Wb$ from both \Cref{algo:id} (\b{ID}) and \Cref{algo:fsid} (\b{OSID}) with small multiplicative oversampling $l = 3k$. Notice that both \b{ID} and \b{OSID} have $\errid{\Xb}{S}{\Wb} > \errskel{\Xb}{S}$, while \b{OSID} in \Cref{algo:fsid} tends to be more expensive because of oversampling.
    \item For squared-norm sampling, without explicitly specifying in the legends, we plot the runtimes and interpolation errors $\errid{\Xb}{S}{\Wb} = \errskel{\Xb}{S}$ with the optimal $\Wb$ from solving the exact least-square problem in \eqref{eq:stable_id}.
\end{enumerate}
The legends in \Cref{fig:id_gmm,fig:id_mnist,fig:id_cifar10,fig:id_Gaussian_exp,fig:snn_a10,fig:helmholtz} are formatted in terms of ``skeleton selection algorithm''-``ID error''. The abbreviations of different skeleton selection algorithms are summarized in \Cref{tab:abbrev_skeleton_selection}, whereas the ID error measurements are described above. 

\begin{figure}[!ht]
    \vspace{-1em}
    \includegraphics[width=\textwidth]{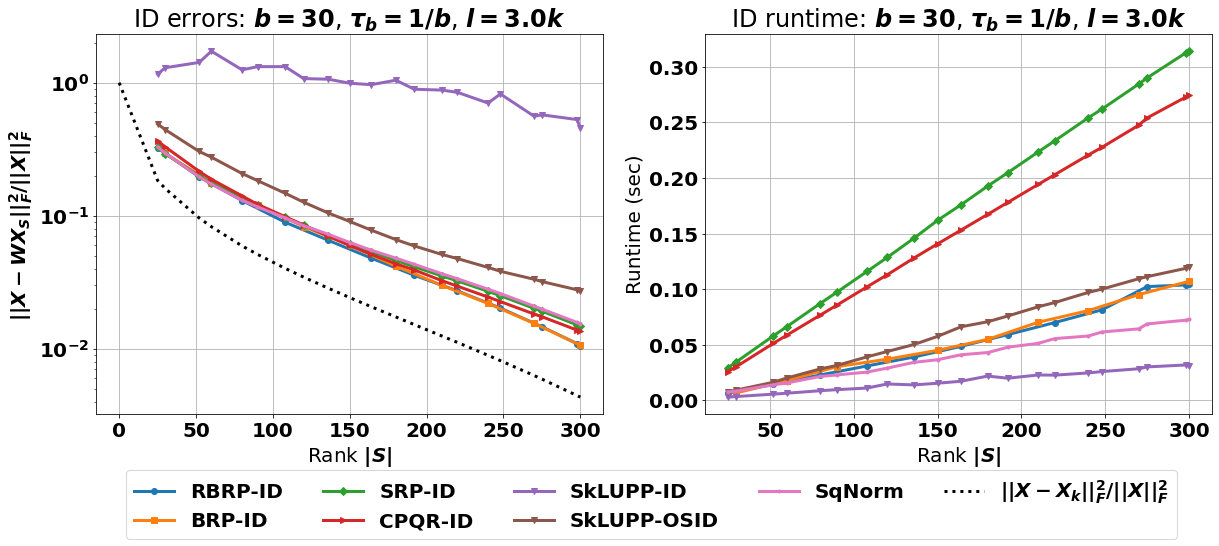}
    \caption{Relative interpolation error and runtime of ID algorithms on \texttt{MNIST}.}\label{fig:id_mnist}
    \vspace{-1em}
\end{figure}

\begin{figure}[!ht]
    \vspace{-1em}
    \includegraphics[width=\textwidth]{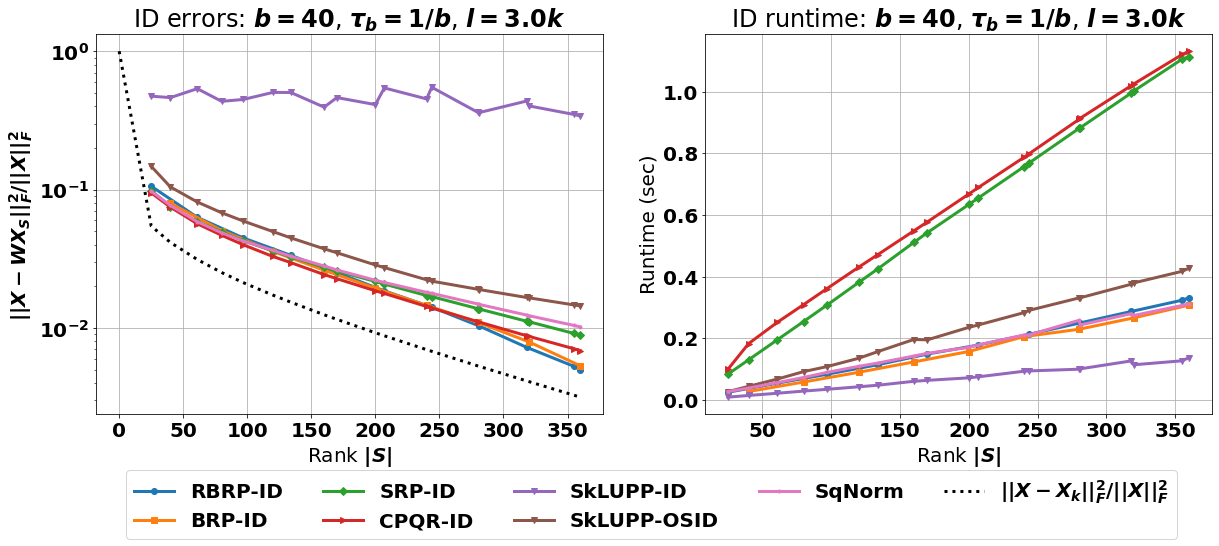}
    \caption{Relative interpolation error and runtime of ID algorithms on \texttt{CIFAR-10}.}\label{fig:id_cifar10}
    \vspace{-1em}
\end{figure}

\begin{figure}[!ht]
    \vspace{-1em}
    \includegraphics[width=\textwidth]{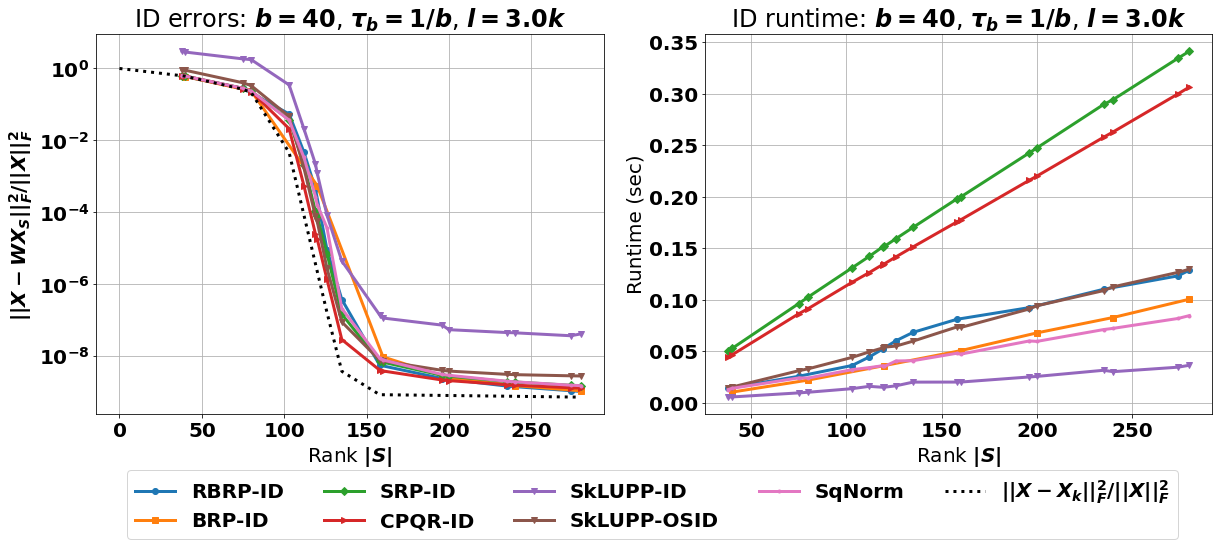}
    \caption{Relative interpolation error and runtime of ID algorithms on \texttt{Gaussian-exp}.}\label{fig:id_Gaussian_exp}
    \vspace{-1em}
\end{figure}

\begin{figure}[!ht]
    \vspace{-1em}
    \includegraphics[width=\textwidth]{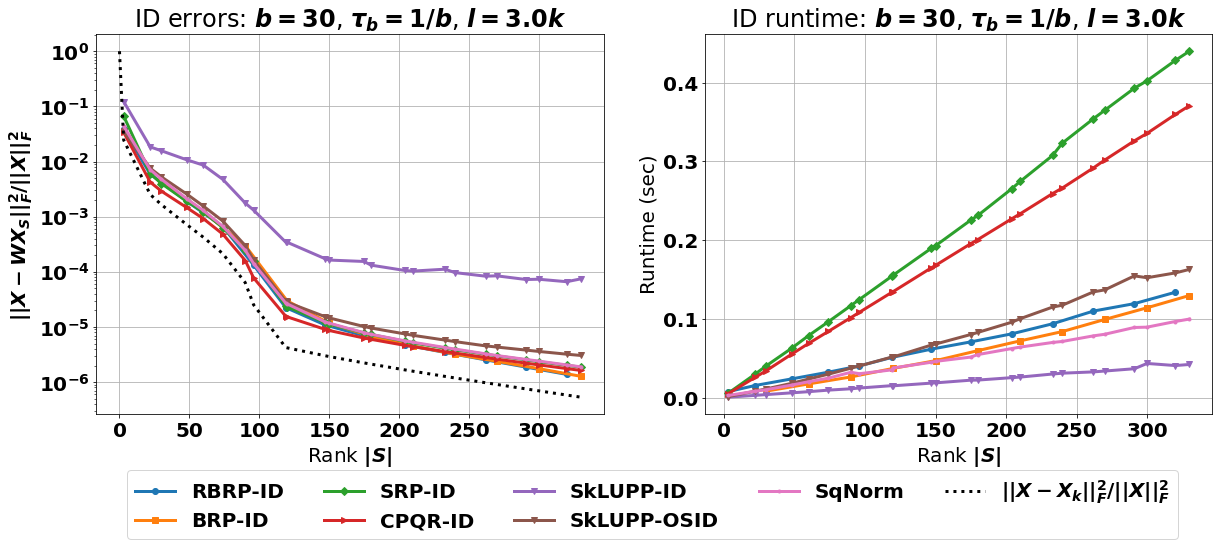}
    \caption{Relative interpolation error and runtime of ID algorithms on \texttt{SNN}.}\label{fig:snn_a10}
    \vspace{-1em}
\end{figure}

\begin{figure}[!ht]
    \vspace{-1em}
    \includegraphics[width=\textwidth]{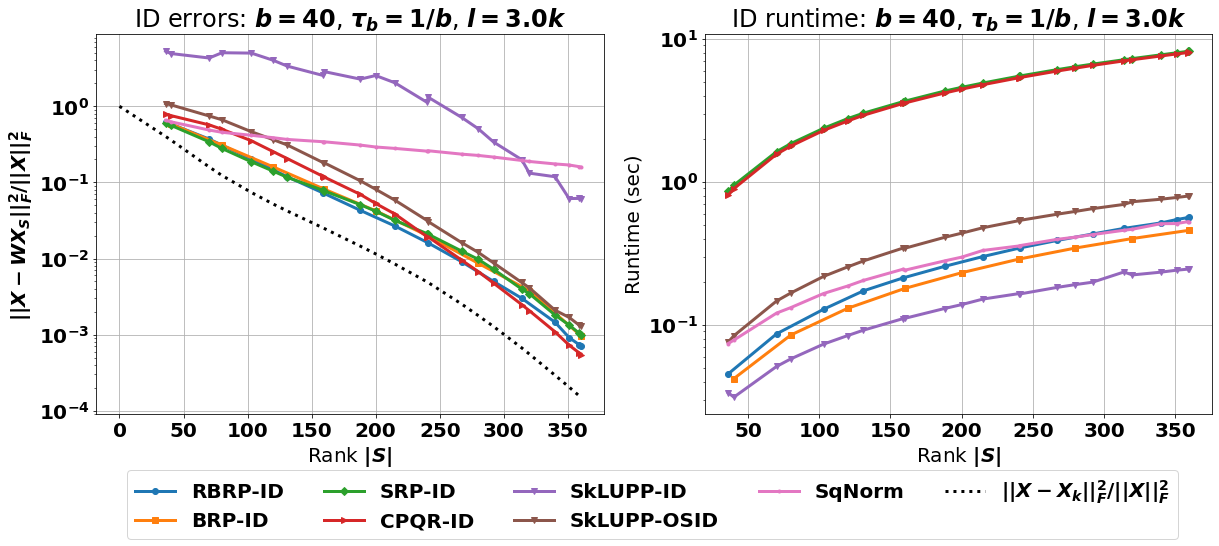}
    \caption{{Relative interpolation error and runtime of ID algorithms on \texttt{Helmholtz}.}}\label{fig:helmholtz}
    \vspace{-1em}
\end{figure}

\begin{remark}[Choice of block size]
    The choice of block size $b$ controls the trade-off between {hardware efficiency} and {rank-adaptiveness}. 
    To be precise, an unsuitably small $b$ lessens the efficiency gained from moving flops to matrix-matrix multiplications, whereas the larger $b$ leads to a higher overestimation of the target rank. In the experiments, we use $b=30$ or $40$ to exploit the {hardware efficiency} of matrix-matrix multiplications without considerably compromising the {rank-adaptiveness}.
\end{remark}

For the adversarial \texttt{GMM} dataset in \Cref{fig:id_gmm}, it is worth mentioning that the sharp transition of RBGP at $\abbr{S} \approx 100$ is because \texttt{GMM} contains $100$ clusters, each with distinct squared-norms. Recall that blockwise greedy pivoting picks points with the maximum norms without randomness. When $\abbr{S} \le 100$, each block drains points from the same clusters with the maximum residual norms first, most of which are redundant points and are later excluded by the robust blockwise filtering. This leads to the inefficiency of RBGP when $\abbr{S} \le 100$. 
Such inefficiency of RBGP in comparison to RBRP again illustrates the superiority of random pivoting over greedy pivoting, \ie the merit of randomness.

\begin{table}[!h]
    \vspace{-1em}
    \caption{\emph{GPU runtime} of the relatively efficient/accurate ID algorithms in \Cref{fig:id_gmm} on an adversarial \texttt{GMM} matrix (\Cref{ex:gmm_pitfall_plain_bas}) of size $10^5 \times 10^3$, with $k=100$ cluster centers. Experiments were performed on an Nvidia V100 GPU. Our code relied on the \texttt{magma\_dgeqp3\_gpu} routine from the MAGMA library ~\cite{dghklty14} for CPQR and the cuBLAS library~\cite{nickolls2008scalable} for other matrix operations.
    }\label{tab:gpu_runtime}
    \centering
    \begin{tabular}{cHcHcHcHcHc} \toprule
    %
    %
    rank  & 32	&   52	&   73	&  100	&  157	&  220	&  283	&  346	&  409	&  472	\\ \midrule
    CPQR-ID & 4.837e-02	&7.739e-02	&1.073e-01	&1.447e-01	&2.241e-01	&3.023e-01	&3.688e-01	&4.439e-01	&4.936e-01	& 5.611e-01 \\
    SkCPQR-OSID & 1.741e-02	&2.394e-02	&3.445e-02	&5.033e-02	&9.031e-02	&1.173e-01	&1.631e-01	&2.134e-01	&2.676e-01	&3.297e-01 \\
    SkLUPP-OSID & 1.036e-02 &1.204e-02 &1.666e-02 &2.233e-02 &3.430e-02 &4.835e-02 &6.960e-02 &8.254e-02 &1.017e-01 &1.189e-01   \\
    RBRP-ID & 3.105e-02	&4.448e-02	&5.838e-02	&7.231e-02	&9.512e-02	&1.073e-01	&1.217e-01	&1.336e-01	&1.473e-01	&1.596e-01 \\
    \bottomrule
    \end{tabular}
\end{table}

Further in \Cref{tab:gpu_runtime}, we compare the {GPU runtimes} of the {high-performance implementation for RBRP} on the \texttt{GMM} dataset with the other relatively efficient/accurate ID algorithms in \Cref{fig:id_gmm}, including
\begin{enumerate*}[label=(\roman*)]
    \item CPQR-ID with one of the best skeleton complexities (as RBRP-ID) and
    \item SkCPQR/SkLUPP-OSID with slightly compromised skeleton complexities (compared to RBRP-ID) but the best CPU runtimes.
\end{enumerate*}
We observe that RBRP-ID is consistently more efficient than CPQR-ID across all ranks. Moreover, as the rank increases, the efficiency of RBRP-ID approaches that of SkCPQR/SkLUPP-OSID, in addition to better accuracy.

From the (non-adversarial) datasets in \Cref{fig:id_mnist,fig:id_cifar10,fig:id_Gaussian_exp,fig:snn_a10,fig:helmholtz}, we observe the following:
\begin{itemize}
    \item \b{RBRP} enjoys (one of) the best skeleton complexities.
    In terms of efficiency, RBRP tends to be slower than sketchy pivoting (SkLUPP) with ID (which is neither {rank-adaptive} nor ID-revealing). However, RBRP is usually faster than sketchy pivoting with OSID, while enjoying lower interpolation error. Overall, RBRP is shown to provide the best combination of accuracy and efficiency when the explicit construction of ID (in contrast to the skeleton subset only) is inquired.
    \item \b{Sketchy pivoting (SkLUPP)} also enjoy (one of) the best skeleton complexities, despite the lack of {rank-adaptiveness} and ID-revealing property (\ie suffering from large interpolation errors). That is, for skeleton selection only (without asking for the interpolation matrix), sketchy pivoting provides the most efficient selection of close-to-the-best skeleton subsets in practice (\cf \Cref{fig:id_gmm_rpgp}). 
    \item The sequential algorithms \b{SRP} and \b{CPQR} are highly competitive in terms of accuracy while being considerably slower than the {hardware-efficient blockwise algorithms}.
    \item With approximately normalized data in \Cref{fig:id_mnist,fig:id_cifar10,fig:id_Gaussian_exp,fig:snn_a10,fig:helmholtz}, {plain \b{BRP} and} \b{SqNorm}\footnote{
        Although we follow \cite{deshpande2006matrix} and show results by sampling $k~\iid$ skeletons with \emph{replacement}, in practice, we observe that sampling \emph{without replacement} tends to provide better skeleton complexity, especially on the adversarial input \Cref{ex:gmm_pitfall_plain_bas}. Intuitively, sampling {without replacement} can be viewed as a weaker version of adaptiveness where only the selected point itself is excluded from the future selection.
    } also provide competitive skeleton complexities. {In these benign cases, BRP can enjoy slightly better runtime than RBRP. However, adversarial inputs like \Cref{ex:gmm_pitfall_plain_bas} can be easily constructed to break such benign cases (\cf \Cref{fig:id_gmm}). RBRP offers an effective remedy for the vulnerability of plain BRP with negligible additional cost.}
    \item When using \b{SqNorm} as an ID algorithm, constructing the interpolation matrix by solving \eqref{eq:stable_id} explicitly can slow down the overall ID algorithm significantly to the extent that the runtimes are similar to those of RBRP. Moreover, sampling methods like SqNorm and DPP lack {rank-adaptiveness} and ID-revealing ability.
\end{itemize}

\section{Discussion}
In this work, we focus on fast and accurate ID algorithms from five perspectives that measure the empirical performance systematically: 
\begin{enumerate*}[label=(\alph*)]
    \item skeleton complexity,
    \item asymptotic complexity,
    \item {hardware efficiency},
    \item {rank-adaptiveness}, and 
    \item ID-revealing property.
\end{enumerate*}
With a careful exploration of various existing ID algorithms through the lens of adaptiveness and/or randomness, we reemphasize the effectiveness of combining adaptiveness and randomness in solving ID problems, while unveiling a critical missing piece in the family of existing algorithms that achieves all five of the aforementioned desired properties. 
To close the gap, we propose \emph{robust blockwise random pivoting (RBRP)}---a {hardware-efficient}, {rank-adaptive}, and ID-revealing algorithm that enjoys among-the-best skeleton and asymptotic complexities in practice compared to the existing algorithms and demonstrates promising robustness to adversarial inputs.
Through numerical experiments, we illustrate the empirical accuracy, efficiency, and robustness of RBRP on a broad spectrum of natural and synthetic datasets.


\section*{Acknowledgments}
The authors wish to thank Yifan Chen, Ethan Epperly, Kevin Miller, Christopher Musco, Rachel Ward, and Robert Webber, as well as the anonymous referees and editors, for enlightening discussions and constructive feedback. 


\bibliographystyle{siamplain}
\bibliography{ref}

\end{document}